\numberwithin{equation}{section}\newtheorem{theorem}{Theorem}[section]
\newtheorem{lemma}[theorem]{Lemma}
\newtheorem{proposition}[theorem]{Proposition}\theoremstyle{remark}
\newtheorem{remark}{Remark}[section]
\theoremstyle{definition}
\newcommand{\bra}[1]{\langle #1 \rangle}
\title[critical Dirac equation]
{Global small solutions to the
critical\\ radial Dirac equation with potential}
\date{\today}    
\author{Federico Cacciafesta}
\address{Federico Cacciafesta: SAPIENZA --- Universit\`a di Roma,
Dipartimento di Matematica, Piazzale A.~Moro 2, I-00185 Roma, Italy}
\email{cacciafe@mat.uniroma1.it}
\subjclass[2000]{
35J10, 
35Qxx, 
42B20, 
42B35 
}
\keywords{%
singular integrals,
weighted spaces,
Schr\"odinger operator,
Schr\"odinger equation,
Strichartz estimates,
smoothing estimates}
\begin{document}\maketitle\begin{abstract}
  We solve globally a radial cubic Dirac equation perturbed
  with a small potential, with data of small critical norm
  $H^{1}$. The main tool are new endpoint estimates of
  the perturbed Dirac flow for a class of radial-type initial data.
\end{abstract}

\section{Introduction}\label{sec:introduction} 

Consider a zero mass nonlinear Dirac equation
\begin{equation}\label{pb1}
iu_t-\mathcal{D}u=F(u),
\qquad
u(0,x)=f(x)
\end{equation}
for the spinor field $u:\mathbb{R}_t\times\mathbb{R}_x^3\rightarrow \mathbb{C}^4$,  where
$\mathcal{D}$ is the operator 
defined by 
$$
\mathcal{D}=i^{-1}
\displaystyle\sum_{k=1}^3\alpha_k\partial_k=-i(\alpha\cdot\nabla)
$$ 
while the $4\times 4$ Dirac matrices are defined as
\begin{equation}
\alpha_k=\left(\begin{array}{cc}0 & \sigma_k \\\sigma_k & 0\end{array}\right),\quad k=1,2,3
\end{equation}
in terms of the Pauli matrices
\begin{equation}
\sigma_1=\left(\begin{array}{cc}0 & 1 \\1 & 0\end{array}\right),\quad
\sigma_2=\left(\begin{array}{cc}0 &-i \\i & 0\end{array}\right),\quad
\sigma_3=\left(\begin{array}{cc}1 & 0\\0 & -1\end{array}\right).
\end{equation}
We shall assume that the nonlinear term $F(u)$ is cubic of 
a very specific form, namely 
\begin{equation}\label{eq:NLFu}
  \text{either}\quad
  F(u)=\langle \beta u,u\rangle u
  \quad\textrm{or}\quad 
  F(u)=\langle u,u\rangle u,
\end{equation}
where $\langle\cdot,\cdot\rangle$ denotes the standard hermitian product in $\mathbb{C}^4$ and $\beta$ is the $4\times 4$ Dirac matrix
$$
\beta=\left(\begin{array}{cc}\mathbb{I}_2 & 0 \\0 & -\mathbb{I}_2\end{array}\right).
$$
Notice that nonlinearities of the form \eqref{eq:NLFu} are the
most interesting from a physical point of view
(see e.g.~\cite{Ranada83-a}).

The typical approach to the Dirac equation is based on the
identity
\begin{equation}\label{eq:waveid}
  (i \partial_{t}-\mathcal{D})(i \partial_{t}+\mathcal{D})\equiv
  (-\partial_{t}+\Delta)\mathbb{I}_{4}
\end{equation}
which follows from the anticommuting relations
\begin{equation}\label{comm1}
\alpha_i\alpha_k+\alpha_k\alpha_i=2\delta_{ik}\mathbb{I}_4,\quad i,k=1,2,3,
\end{equation}
\begin{equation}\label{comm2}
\alpha_i\beta+\beta\alpha_i=0,\quad i=1,2,3,\qquad
\beta^2=\mathbb{I}_4.
\end{equation}
In many cases the identity allows to reduce problems 
for the massless Dirac equation
to analogous ones for the wave equation, for which many effective
tools are available.

The classical strategy to solve
nonlinear dispersive equations is by the use of a fixed point argument in
a suitable space, via the appropriate space-time 
\emph{Strichartz estimates}.
A huge literature is available on these estimates for several 
dispersive operators (wave, Schroedinger, Klein-Gordon, Dirac 
and others). For homogeneous nonlinear terms one typically
finds a threshold regularity $s_{c}$ in the scale of Sobolev
spaces $H^{s}$, such that for
\emph{subcritical data} with $s>s_{c}$ solvability holds,
while for \emph{supercritical data} with $s<s_{c}$ one has
various degrees of ill-posedness. Data of critical regularity
give rise to difficult questions which depend on the precise
structure of the equation.

If we denote with $e^{it\mathcal{D}}$ the propagator
for the Dirac operator $\mathcal{D}$, which can be defined via
Fourier transform as the operator of symbol $e^{it \xi \cdot \alpha}$,
Strichartz estimates for the linear 3D Dirac equation can be written
as follows 
(see \cite{GinibreVelo95-b},
\cite{KeelTao98-a}, and,
for the Dirac equation,
\cite{DanconaFanelli07-a},
\cite{DanconaFanelli08-a}):
\begin{equation}\label{freestr}
\| |D|^{\frac{1}{q}-\frac{1}{p}-\frac{1}{2}}
e^{it\mathcal{D}}f\|_{L^p_t L^{q}_x}\lesssim
\| f\|_{L^2}
\end{equation}
where the exponents $(p,q)$ are \emph{admissible}, 
that is to say
$$
\frac{2}{p}+\frac{2}{q}=1,\quad 2< p\leq\infty,\quad2\leq q<\infty.
$$
Here and in the following we shall use the notation
$|D|=(-\Delta)^{1/2}$, the mixed space-time Strichartz space
$L^p_tL^q_x$ is
$L^p(\mathbb{R}_t; L^q(\mathbb{R}^n_x))$,
and $\dot H^s$ is the homogeneous Sobolev space with  norm 
$\| f\|_{\dot H^s}=\| |D|^sf\|_{L^2}$.
As it is well known, estimate \eqref{freestr} is false for
the \emph{endpoint} couple $(p,q)=(2,\infty)$, which would correspond
to the estimate
\begin{equation}\label{end}
\| e^{it\mathcal{D}}f\|_{L^2_t L^\infty_x}\lesssim
\| f\|_{\dot H^1}.
\end{equation}
The failure of \eqref{end} for generic data
was first noticed for 3D the wave equation in
\cite{KlainermanMachedon93-a},
and the corresponding statement for
the Dirac equation follows easily from the relation connecting
the Dirac and wave flows
(see \eqref{dirsol} below).

A clever use of the estimates \eqref{freestr} is sufficient for
the study of \eqref{pb1} in the subcritical case.
Local existence, and global existence for small data, was proved
in \cite{EscobedoVega97-a} for nonlinearities of the form
$\langle \beta u,u\rangle^{(p-1)/2} u$ with $p>3$ and data
in $H^{s}$ with $s>3/2-1/(p-1)$.
The case of cubic nonlinearities and small data in
$H^{s}$, $s>1$, was solved in \cite{MachiharaNakamuraOzawa04-a},
where the result was generalized to all space dimensions.
We also mention that systems involving the Dirac equation,
like Dirac-Klein-Gordon and Maxwell-Dirac, have beed the
object of intense attention and the subcritical theory was
completed only recently
(see
\cite{DanconaFoschiSelberg07-a},
\cite{DanconaFoschiSelberg07-b},
\cite{DanconaFoschiSelberg10-a}).

The failure of the endpoint estimate \eqref{freestr}
means that above methods break down in the critical case
of a cubic nonlinearity with $H^{1}$ data, and indeed the problem
of global existence is still open in this case.
In an attempt to overcome this limitation, in
\cite{MachiharaNakamuraNakanishi05-a}
the following precised estimate was proved:
\begin{equation}\label{giapp}
\| e^{it\mathcal{D}}f\|_{L^2_tL^\infty_{|x|}L^p_\theta}\lesssim
\| f\|_{\dot{H}^1}\qquad
\forall p<\infty.
\end{equation}
The norm at the left hand side distinguishes between the
integrability in the radial and tangential
directions. Using estimate \eqref{giapp}, Machihara et al. were able
to prove global well posedness for problem (\ref{pb1}) for small $\dot{H}^1$-norm data with slight additional angular regularity,
and in particular for all \emph{radial} $\dot H^{1}$ data. This
is especially interesting since, as we shall see, radial data
do not correspond to radial solution for the Dirac equation.

The main goal of the present paper is to study a special
class of solutions to equation \eqref{pb1} and more
generally to a potential perturbation of the form
\begin{equation}\label{pb2}
iu_t-\mathcal{D}u+V(x) u=F(u),
\qquad
u(0)=f(x),
\end{equation}
which can be considered as a suitable generalization of the
radial solutions to the wave equation in the context of the
Dirac equation.
We recall that, for small potentials $V$ with suitable decay
at infinity, the full range of Strichartz estimates \eqref{freestr}
holds also for the perturbed flow
$e^{it(\mathcal{D}+V)}$, as proved in \cite{DanconaFanelli08-a}.
Hence subcritical problems can be treated exactly as in the
unperturbed case and one can extend the results of 
\cite{EscobedoVega97-a}, \cite{MachiharaNakamuraOzawa04-a}
to \eqref{pb2} in a straightforward way.
Here we focus on the more difficult case of
critical $H^{1}$ data with an additional symmetry
assumption like
\begin{equation*}%
  f=f_1+\mathcal{D}f_2
  \quad\text{with}\quad
  f_1\in\dot{H}^1,\;f_2\in\dot{H}^2,\quad f_1,f_2\;\rm{radial}.
\end{equation*}
In addition, in order to preserve the symmetry of solutions, 
we need to assume that the potential $V(x)$ is
\emph{spherically symmetric} in the sense of \cite{Thaller92-a}.

Our first result is an endpoint estimate for the linear flow:

\begin{theorem}\label{teo1}
Let $V(x)$ be a $4\times 4$ matrix of the form 
\begin{equation}\label{pppot}
V(x)=V_1(|x|)\mathbb{I}_4+i\beta(\alpha\cdot\hat{x})V_2(|x|),\qquad
V_1,V_2:\mathbb{R}^{+}\to \mathbb{R},\quad x\in \mathbb{R}^{3}
\end{equation}
(where $\widehat{x}=x/|x|$). Assume that
for some $\sigma>1$ and some sufficiently small
$\delta>0$
\begin{equation}\label{Vhp}
  |V(x)|\le \frac{\delta}
  {|x|^{1/2}|\log|x||^{\sigma/2}+|x|^{\sigma}}.
\end{equation}
Then the following endpoint Strichartz estimate
\begin{equation}\label{endV}
\| e^{it(\mathcal{D}+V)}f\|_{L^2_t L^\infty_x}\lesssim
\| f\|_{\dot H^1}
\end{equation}
holds for all initial data $f\in\mathcal{\dot H}^1$ where
\begin{equation}\label{Hclass}
\mathcal{\dot H}^1=
\{f_1+\mathcal{D}f_2,\;f_1\in\dot{H}^1(\mathbb{R}^{3}),
\;f_2\in\dot{H}^2(\mathbb{R}^{3}),
\;f_1,f_2\;
\rm{radial}\}.
\end{equation}
\end{theorem}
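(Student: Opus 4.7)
The plan is to reduce the perturbed endpoint estimate to two ingredients, (i) a free endpoint estimate $\|e^{it\mathcal{D}}g\|_{L^2_tL^\infty_x}\lesssim \|g\|_{\dot H^1}$ valid on the class $\dot{\mathcal{H}}^1$, and (ii) a weighted Kato smoothing estimate for the perturbed flow $e^{it(\mathcal{D}+V)}$, and then to close the circle with a Duhamel iteration exploiting the smallness of $\delta$.

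First I would establish the free endpoint on $\dot{\mathcal{H}}^1$. Using $\mathcal{D}^2=-\Delta$, the Dirac propagator factors as
\[
e^{it\mathcal{D}}=\cos(t|D|)+i\,\frac{\sin(t|D|)}{|D|}\mathcal{D}.
\]
Applied to $f=f_1+\mathcal{D}f_2$ with $f_1,f_2$ radial, and using that $\mathcal{D}$ commutes with the scalar wave multipliers while $\mathcal{D}^2=|D|^2$, one obtains a sum of four terms, each of which is either a scalar radial solution of the wave equation (with datum in $\dot H^1$) or $\mathcal{D}$ applied to such a solution. Since for any radial scalar $h$ one has $\mathcal{D}h=-i(\alpha\cdot\hat x)h'(r)$ with $(\alpha\cdot\hat x)^2=\mathbb{I}_4$, the pointwise identity $|\mathcal{D}h|_{\mathbb{C}^4}=|h'(r)|$ reduces each summand to a radial scalar quantity, and the well-known radial $L^2_tL^\infty_x$ endpoint for the $3$D wave equation delivers the bound.

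Next I would check the structural compatibility between $V$ and the class $\dot{\mathcal{H}}^1$: a direct computation, based on the anticommuting relations \eqref{comm2} and on the identity $\mathcal{D}h=-i(\alpha\cdot\hat x)h'(r)$ for radial $h$, shows that the multiplication $f\mapsto Vf$ maps $\dot{\mathcal{H}}^1$ into itself (in the appropriate distributional sense). The role of the off-diagonal term $i\beta(\alpha\cdot\hat x)V_2$ is precisely to exchange the radial part $f_1$ with the $\mathcal{D}$-part $\mathcal{D}f_2$ so as to preserve the splitting; this is why generic spherically symmetric potentials cannot be allowed and the precise form \eqref{pppot} is imposed.

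Finally I would run a Duhamel iteration: writing
\[
e^{it(\mathcal{D}+V)}f=e^{it\mathcal{D}}f-i\int_0^t e^{i(t-s)\mathcal{D}}V\,e^{is(\mathcal{D}+V)}f\,ds,
\]
the first term is controlled by step~1. For the Duhamel term I would dualize the free endpoint on $\dot{\mathcal{H}}^1$ into a retarded dual-smoothing estimate of the form
\[
\Big\|\int_0^t e^{i(t-s)\mathcal{D}}g(s)\,ds\Big\|_{L^2_tL^\infty_x}\lesssim \|w^{-1}g\|_{L^2_{t,x}},
\]
with $w(x)=|x|^{-1/2}|\log|x||^{-\sigma/2}+|x|^{-\sigma}$ matching the reciprocal of the right-hand side of \eqref{Vhp}, and combine it with a weighted Kato smoothing bound for the perturbed flow,
\[
\bigl\|w(x)\,e^{is(\mathcal{D}+V)}f\bigr\|_{L^2_{t,x}}\lesssim \|f\|_{\dot H^1},
\]
which follows from the free Kato smoothing for $\mathcal{D}$ by a perturbative argument using the smallness of $\delta$ (as in \cite{DanconaFanelli08-a}). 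The decay assumption \eqref{Vhp} then makes $\|wVw\|_{L^\infty_x}\lesssim \delta$, so the Duhamel term is bounded by $\delta\|f\|_{\dot H^1}$ and can be absorbed provided $\delta$ is small enough.

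The main obstacle I anticipate is the structural step: proving that the class $\dot{\mathcal{H}}^1$ is genuinely preserved by $V$ and that the resulting source $Vu(s)$ lies in the correct dual of $\dot{\mathcal{H}}^1$ so that the free dual endpoint can be invoked termwise. A secondary but delicate issue is the logarithmic weight in \eqref{Vhp}, which reflects the logarithmic endpoint loss typical of $L^2_tL^\infty_x$ estimates and must be tracked precisely in the Kato smoothing bound so as to match the decay assumption on $V$ without waste.
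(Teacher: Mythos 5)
Your blueprint --- free endpoint on $\dot{\mathcal{H}}^1$, a retarded estimate mapping a weighted $L^2_tL^2_x$ source to $L^2_tL^\infty_x$, the perturbed Kato smoothing, and Duhamel --- is exactly the architecture the paper uses. However, two of the steps are glossed over in a way that hides genuine difficulties, and one feature of the argument is misdescribed.

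The central gap is the "retarded dual-smoothing estimate." You propose to obtain it by ``dualizing the free endpoint,'' but duality applied to the $L^2_tL^\infty_x\to\dot H^1$ bound alone produces a right-hand side of $L^2_tL^1_x$ type, not a weighted $L^2_tL^2_x$ norm. To get a weighted $L^2$ source norm you must \emph{combine} the free endpoint with the free Kato smoothing in a $T^*T$-type argument, and at the retarded level this runs into the well-known failure of the Christ--Kiselev lemma when both time exponents equal $2$. The paper sidesteps this entirely: Lemma~\ref{nonomwave} is proved by a \emph{direct} radial computation --- it repeats the maximal-function bound of the homogeneous case, uses Cauchy--Schwarz on the time integral to absorb $\int_0^t$ into $\int_{\mathbb R}$ at the cost of a weight $\langle y\rangle^{1/2+}$, and then proves the key interpolation inequality \eqref{interpol} (Plancherel for $k=0$, Hardy's inequality for $k=1$). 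This produces the weight $\langle x\rangle^{1/2+}|D|$ on the source, which is weaker (hence better) than the reciprocal of the smoothing weight; the two are then matched only at the very end through the finiteness of $\|\langle x\rangle^{1/2+}w_\sigma^{1/2}V\|_{L^\infty}$, which is precisely what assumption \eqref{Vhp} guarantees. Your proposal presumes that the mixed-estimate weight must coincide with the Kato weight, which is both unnecessary and, taken literally, would impose an unneeded singularity at the origin.

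A second issue is the statement of the perturbed smoothing estimate. You write $\|w\,e^{it(\mathcal D+V)}f\|_{L^2_{t,x}}\lesssim\|f\|_{\dot H^1}$, but D'Ancona--Fanelli give $\|w_\sigma^{-1/2}e^{it(\mathcal D+V)}f\|_{L^2_{t,x}}\lesssim\|f\|_{L^2}$, with $L^2$ on the right; $|D|$ does not commute with $\mathcal D+V$, so one cannot simply upgrade to $\dot H^1$. The paper handles this by applying $|D|^{-1}$ to the whole Duhamel identity and using that $|D|$ commutes with the \emph{free} flow; the $|D|$ appearing in the source norm of Proposition~\ref{prop2} then cancels against the $|D|^{-1}$, leaving the precisely-matched $\|\langle x\rangle^{1/2+}Vu\|_{L^2_tL^2_x}$. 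This normalization step is needed to make the exponents close, and it is missing from your argument. Finally, the language of ``absorption'' is a red herring: since the Kato estimate you invoke is already for the perturbed flow, the Duhamel term is bounded directly by $\delta\|f\|_{L^2}$ without any fixed-point or bootstrap --- the smallness of $\delta$ only enters through the D'Ancona--Fanelli hypothesis, not through an iteration.
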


We recall that condition \eqref{Vhp} 
is sufficient to ensure that the perturbed Dirac operator 
$\mathcal{D}+V$ is self-adjoint on the domain $H^1(\mathbb{R}^3)^4$.
For this and many other properties of the Dirac equation,
a comprehensive reference is \cite{Thaller92-a}.

The natural application of estimate \eqref{endV} is to prove
global well posedness for the critical equation \eqref{pb2}.
However the nonlinear term $F(u)$ does not operate on the
space $\mathcal{\dot H}^{1}$ and 
additional restrictions on the algebraic
structure of the data are necessary.
More precisely, it is possible
to decompose the space $L^{2}(\mathbb{R}^{3})^{4}$
as a direct sum
\begin{equation*}
  L^{2}(\mathbb{R}^{3})^{4}\simeq
  \bigoplus_{j=\frac12,\frac32,\dots}^{\infty}
  \bigoplus_{m_{j}=-j}^{j}
  \bigoplus_{k_{j}=\atop\pm(j+1/2)}
  L^{2}(0,+\infty;dr)
  \otimes
  H_{m_{j},k_{j}}.
\end{equation*}
where the spaces $H_{m_{j},k_{j}}$ are two dimensional and
are generated by spherical harmonics on the sphere $\mathbb{S}^{2}$
(this is called a dcomposition in \emph{partial wave subspaces}).
When $j=1/2$, we have four spaces
\begin{equation*}
  L^{2}(0,+\infty;dr)
  \otimes
  H_{m_{1/2},k_{1/2}}
\end{equation*}
corresponding to the four possible choices of indices
\begin{equation}\label{eq:indexch}
  (m_{1/2},k_{1/2})=
  (-1/2,-1), \quad
  (-1/2,1), \quad
  (1/2,-1), \quad
  (1/2,1).
\end{equation}
Then we notice the mildly surprising fact that
each of these four spaces is invariant not only for the Dirac operator 
but also for the action of cubic nonlinearities
of the forms \eqref{eq:NLFu}. In a sense, these spaces
can be considered as a suitable 
generalization of radial functions adapted to the structure 
of the nonlinear problem
\eqref{pb2}. A detailed analysis of the partial wave decomposition is
given in Section \ref{thaller}, with explicit forms for the
functions in these spaces (see Lemma \ref{1/2}
and in particular \eqref{phi1}--\eqref{phi4}).

Thanks to this invariance, we can prove the following global
existence result:

\begin{theorem}\label{teo2}
Consider the equation on $\mathbb{R}\times \mathbb{R}^{3}$
\begin{equation}\label{nlpb}
iu_t-\mathcal{D}u+V(x)u=F(u),\qquad
u(0,x)=f(x)
\end{equation}
where the potential has the form
\begin{equation*}
  V=V_1(|x|)\mathbb{I}_4+i\beta(\alpha\cdot\hat{x})V_2(|x|),\qquad
  \widehat{x}=x/|x|
\end{equation*}
and satisfies assumption \eqref{Vhp}, while
\begin{equation*}
  F(u)=
  \langle \beta u,u\rangle u
  \qquad\text{or}\qquad
  F(u)=
  \langle u,u\rangle u.
\end{equation*}
Assume the initial data $f$ belong to a space
$\dot H^1((0,\infty),dr)\otimes  \mathcal{H}_{m_{1/2},k_{1/2}}$
for one of the choices \eqref{eq:indexch}.
Then if the $\dot H^{1}$ norm of the data is sufficiently small,
problem \eqref{nlpb} has a unique global solution
in the class
$C_t(\mathbb{R},\dot H^1)\cap L^2_t(\mathbb{R},L^\infty).$
\end{theorem}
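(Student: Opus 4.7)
My plan is to run a standard contraction-mapping argument on the Duhamel formulation
$$
u(t) = e^{it(\mathcal{D}+V)}f - i\int_0^t e^{i(t-s)(\mathcal{D}+V)} F(u(s))\, ds
$$
in the resolution space $Y = C_t(\mathbb{R}, \dot H^1) \cap L^2_t L^\infty_x$, with two main ingredients: the endpoint estimate of Theorem \ref{teo1}, and the invariance of the chosen partial wave subspace $X_{m,k} := \dot H^1((0,\infty), dr)\otimes \mathcal{H}_{m_{1/2},k_{1/2}}$ under every operation appearing in the iteration.

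The first step is to establish three algebraic facts. First, $X_{m,k}$ embeds in the class $\mathcal{\dot H}^1$ of Theorem \ref{teo1}: using the explicit representations in Lemma \ref{1/2} (formulas \eqref{phi1}--\eqref{phi4}), one writes a generic element of $X_{m,k}$ in the form $f_1 + \mathcal{D} f_2$ with $f_1, f_2$ radial of suitable Sobolev regularity. Second, the spherical symmetry of $V$ in Thaller's sense guarantees that $\mathcal{D}+V$ preserves each partial wave subspace, so $e^{it(\mathcal{D}+V)}$ leaves $X_{m,k}$ invariant at every time. Third, and most delicate, a pointwise computation using \eqref{phi1}--\eqref{phi4} shows that $F(u) \in X_{m,k}$ whenever $u \in X_{m,k}$, for either choice of cubic nonlinearity in \eqref{eq:NLFu}; this is what permits Theorem \ref{teo1} to be applied also to the inhomogeneous term.

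Granting these invariances, the linear part is bounded in $Y$ by Theorem \ref{teo1} together with the energy identity for the self-adjoint generator $\mathcal{D}+V$. For the Duhamel term, $F(u(s))$ lies in $X_{m,k}$ at every $s$, so Theorem \ref{teo1} combined with the Christ--Kiselev lemma (applicable since $1<2$) yields
$$
\Bigl\|\int_0^t e^{i(t-s)(\mathcal{D}+V)} G(s)\, ds\Bigr\|_Y \lesssim \|G\|_{L^1_t \dot H^1_x}
$$
for $G(s) \in X_{m,k}$. The nonlinear estimate is then routine: fractional Leibniz plus H\"older in space and time give
$$
\|F(u)\|_{L^1_t \dot H^1} \lesssim \|u\|_{L^\infty_t \dot H^1} \|u\|_{L^2_t L^\infty_x}^2 \leq \|u\|_Y^3,
$$
and an analogous trilinear bound on $F(u)-F(v)$ supplies the Lipschitz estimate. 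For $\|f\|_{\dot H^1}$ sufficiently small the Duhamel map is therefore a contraction on a small ball of $Y$, producing the unique global solution, whose continuity in $\dot H^1$ is inherited from the Duhamel representation.

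The main obstacle I expect is the algebraic verification that $X_{m,k}$ is invariant under the cubic nonlinearity $F$: because $F$ is a pointwise operation that mixes the four spinor components nontrivially through $\langle \beta u,u\rangle$ or $\langle u,u\rangle$, one must check that the precise angular structure of spinors in $\mathcal{H}_{m_{1/2},k_{1/2}}$ (encoded by the low-order spherical harmonics with specific sign conventions in \eqref{phi1}--\eqref{phi4}) is reproduced by the relevant cubic monomials -- in particular that the scalars $\langle \beta u,u\rangle$ and $\langle u,u\rangle$ reduce, for $u\in X_{m,k}$, to purely radial functions. Once this is in place, the remaining linear and nonlinear estimates are entirely standard; a minor technical point is ensuring the Christ--Kiselev truncation is compatible with the partial wave decomposition, which is automatic since it only alters the time variable.
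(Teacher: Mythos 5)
Your overall strategy --- contraction in $X=L^2_tL^\infty_x\cap L^\infty_t\dot H^1_x$, the endpoint estimate of Theorem \ref{teo1} for the linear part, the observation that the $j=1/2$ partial wave subspaces sit inside $\mathcal{\dot H}^1$ as $f_1+\mathcal{D}f_2$ with $f_1,f_2$ radial, and the invariance of these subspaces under the cubic nonlinearity (Lemma \ref{1/2}) feeding a trilinear H\"older bound --- is exactly the paper's. The one genuine divergence is the Duhamel splitting. You propagate $F(u)$ with the perturbed flow $e^{i(t-s)(\mathcal{D}+V)}$; the paper instead uses \eqref{nlint}, absorbing only $Vu$ into the linear flow acting on the data, so that the nonlinearity is propagated by the \emph{free} flow $e^{i(t-s)\mathcal{D}}$. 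This matters for two reasons. First, with the free flow one has exactly $\|e^{-is\mathcal{D}}G\|_{\dot H^1}=\|G\|_{\dot H^1}$ (since $|D|$ commutes with $\mathcal{D}$), so Minkowski's inequality alone gives $\bigl\|\int_0^te^{i(t-s)\mathcal{D}}G\,ds\bigr\|_X\lesssim\|G\|_{L^1_t\dot H^1}$ directly from the homogeneous estimate \eqref{homdir}; no Christ--Kiselev lemma is needed, and for an $L^1_t$ source it buys you nothing anyway. Second, your version requires $e^{it(\mathcal{D}+V)}$ to be bounded on $\dot H^1$ uniformly in $t$, and the ``energy identity for the self-adjoint generator'' only yields conservation of $\|u\|_{L^2}$ and of $\|(\mathcal{D}+V)u\|_{L^2}$; to convert the latter into an $\dot H^1$ bound you must additionally establish the equivalence $\|(\mathcal{D}+V)g\|_{L^2}\simeq\|g\|_{\dot H^1}$, which does hold for small $\delta$ in \eqref{Vhp} by Hardy's inequality but is an extra step you have not supplied. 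Everything else (invariance of the partial wave subspace under the perturbed flow via Proposition \ref{dir+v}, the reduction of $\langle u,u\rangle$ and $\langle\beta u,u\rangle$ to radial scalars, the trilinear and Lipschitz estimates) matches the paper; so either patch the $\dot H^1$-boundedness point or adopt the paper's splitting, after which your argument closes.
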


The paper is organized as follows. Sections 2 contains an extension
of the endpoint estimate for the free Dirac operator which is then
adapted in Section 3 to a mixed endpoint-smoothing estimate with
weights for the nonhomogeneous linear Dirac equation.
Section 4 is devoted to the proof of Theorem (\ref{teo1}).
In Section 5 we recall the structure of the Dirac operator, the
partial wave decomposition, and we investigate the interaction of
the algebraic structure with the nonlinear term. Section 6 contains
the proof of Theorem (\ref{teo2}). 
\\\\The author wishes to thank prof. Piero D'Ancona for his help and the referee for some useful suggestions.

\section{The homogeneous endpoint estimate}\label{sec:omo}

Consider the 3-dimensional, massless, free Dirac 
equation (\ref{pb1}). As already observed,
the Dirac flow does not preserve radiality so that
we cannot hope to repeat here the simple argument used
in \cite{KlainermanMachedon93-a} for the wave equation.
However we can prove an endpoint estimate for suitable classes
of function; to this end we need a deeper insight in the 
structure of the Dirac operator expressed in radial coordinates.

Let $u=e^{it \mathcal{D}}f$
and notice that, thanks to identity \eqref{eq:waveid}, $u$ is
(formally) a solution of
\begin{equation}\label{wave}
\begin{cases}
\square u=0\\
u(0)=f(x)\\
u_t(0)=i\mathcal{D}f.
\end{cases}
\end{equation}
This gives the formula
\begin{equation}\label{dirsol}
\displaystyle u=e^{it\mathcal{D}}f=\cos(t|D|)f+i\frac{\sin(t|D|)}{|D|}\mathcal{D}f
\end{equation}
and we easily see that this representation is valid for generic
distribution data.

We start by proving the following result:

\begin{proposition}\label{prop1}
Let $f$ belong to the space $\mathcal{\dot{H}}^1$
defined in \eqref{Hclass}.
Then the following endpoint Strichartz estimate holds:
\begin{equation}\label{homdir}
\| e^{it\mathcal{D}}f\|_{L^2_tL^\infty_x}\lesssim
\| f\|_{\dot H^1}.
\end{equation}
\end{proposition}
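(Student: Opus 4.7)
The plan is to reduce to the free wave equation via the identity \eqref{dirsol}, exploit the algebraic form of $\mathcal{\dot H}^1$ to pin down the angular content of the propagated function, and then invoke the refined angular estimate \eqref{giapp} of Machihara--Nakamura--Nakanishi to close at the endpoint.

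First, writing $f = f_1 + \mathcal{D}f_2$ with $f_1,f_2$ radial and expanding \eqref{dirsol}, I would use $\mathcal{D}^2=|D|^2$ together with the fact that $\mathcal{D}$ commutes with every function of $|D|$ to regroup
$$e^{it\mathcal{D}}f = A(t,x) + \mathcal{D}B(t,x),$$
where $A:=\cos(t|D|)f_1 + i|D|\sin(t|D|)f_2$ and $B:=\cos(t|D|)f_2 + i(\sin(t|D|)/|D|)f_1$. Both $A$ and $B$ are (componentwise scalar) wave flows of radial Cauchy data, so they are radial in $x$ for every fixed $t$; moreover the corresponding energies are controlled, since $A$ has data $(f_1,-i\Delta f_2)$ with $\dot H^1\times L^2$ norm $\lesssim \|f_1\|_{\dot H^1}+\|f_2\|_{\dot H^2}$, and $B$ has data $(f_2,if_1)$ with $\dot H^2\times\dot H^1$ norm similarly controlled.

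Next, since $B$ is radial, $\mathcal{D}B = -i(\alpha\cdot\hat x)\,\partial_r B$, so the angular dependence of $\mathcal{D}B$ is carried entirely by the three components $\hat x_1,\hat x_2,\hat x_3$, while $A$ itself is radial. Consequently, for every $(t,r)$ the angular slice $\theta\mapsto(e^{it\mathcal{D}}f)(t,r\theta)$ takes values in a \emph{fixed finite-dimensional subspace} of $L^2(\mathbb{S}^2;\mathbb{C}^4)$ spanned by the constants together with $\hat x_1,\hat x_2,\hat x_3$. On such a subspace all $L^p_\theta$ norms are equivalent, with constant depending only on $p$ and independent of $t,r$.

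To conclude I would apply \eqref{giapp} at some fixed finite $p$ and chain
$$\|e^{it\mathcal{D}}f\|_{L^2_t L^\infty_x} \lesssim \|e^{it\mathcal{D}}f\|_{L^2_t L^\infty_{|x|} L^p_\theta} \lesssim \|f\|_{\dot H^1} \lesssim \|f_1\|_{\dot H^1}+\|f_2\|_{\dot H^2},$$
the first step using the uniform angular norm equivalence above, the second being exactly \eqref{giapp}, and the third using $\|\mathcal{D}f_2\|_{\dot H^1}\lesssim\|f_2\|_{\dot H^2}$. The main delicate point I expect to encounter is justifying the decomposition $e^{it\mathcal{D}}f=A+\mathcal{D}B$ at the given regularity (one needs to manipulate $|D|^{\pm 1}$ and $\mathcal{D}$ on $\dot H^1$, $\dot H^2$ distributions) and ensuring that the angular $L^p_\theta$--$L^\infty_\theta$ equivalence is truly uniform in $(t,r)$; both are handled by a standard density reduction to smooth compactly supported radial data followed by linear algebra on the fixed finite-dimensional angular subspace.
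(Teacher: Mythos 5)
Your argument is correct, and it takes a genuinely different route from the paper's. The paper proves Lemma~\ref{lem1} directly: a maximal-function computation in spherical Fourier coordinates gives
$\|e^{it|D|}g\|_{L^2_tL^\infty_x}\lesssim\|g\|_{\dot H^{(n-1)/2}}$
for radial $g$ together with the analogous bound for $\tfrac{e^{it|D|}}{|D|}\mathcal{D}g$, and these are then combined through the representation \eqref{dirsol} applied to $f=f_1+\mathcal{D}f_2$. You instead observe that for data in $\mathcal{\dot H}^1$ the angular profile of $e^{it\mathcal{D}}f$ at each fixed $(t,r)$ lies in the fixed finite-dimensional space spanned (componentwise) by $1,\hat x_1,\hat x_2,\hat x_3$ --- a consequence of the identity $\mathcal{D}B=-i(\alpha\cdot\hat x)\partial_rB$ for $B$ radial --- so that $L^p_\theta$ controls $L^\infty_\theta$ with a constant independent of $(t,r)$, and you then quote the Machihara--Nakamura--Nakanishi estimate \eqref{giapp} as a black box. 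The finite-dimensionality observation is clean and the resulting argument is shorter, but it is not self-contained: it inherits a dependence on \eqref{giapp}, whereas the paper's maximal-function computation is elementary and self-contained (and, incidentally, quite close in spirit to the radial-angular arguments used to prove \eqref{giapp} in the first place, so the two routes are related at a deeper level). One small remark: the points you flag as ``delicate'' are in fact immediate --- the decomposition $e^{it\mathcal{D}}f=A+\mathcal{D}B$ is routine Fourier-multiplier algebra valid on $\dot H^1\times\dot H^2$, and uniformity of the angular norm equivalence in $(t,r)$ is automatic because the finite-dimensional subspace in question does not depend on $(t,r)$; only the coefficients do.
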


\noindent
By formula \eqref{dirsol}, we see that the proof is an immediate 
consequence of the following Lemma. 
Notice that the proof of the first estimate \eqref{hom1} is inspired
by an argument of \cite{FangWang08-a} which holds for all $n\ge3$
without modification, while we fix $n=3$ in the second
estimate.

\begin{lemma}\label{lem1}
Let $f$ be a radial function and let $e^{it|D|}$ be the linear propagator associated to the wave operator. Then the following estimates hold:
\begin{equation}\label{hom1}
\| e^{it|D|}f\|_{L^2_tL^\infty_x}\lesssim
\| f\|_{\dot {H}^\frac{n-1}{2}}\qquad (n\ge3)
\end{equation}
\begin{equation}\label{hom2}
\left\| \frac{e^{it|D|}}{|D|}\mathcal{D}f
\right\|_{L^2_tL^\infty_x}\lesssim
\| f\|_{\dot {H}^1}\qquad (n=3).
\end{equation}
\end{lemma}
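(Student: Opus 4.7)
The plan is to treat both bounds by a common mechanism: for fixed $x$, use the Fourier representation of the wave propagator to write the left-hand side as a $\rho$-Fourier transform in $t$, apply Plancherel in the time variable, and then observe that the spatial kernel appearing under the $\rho$-integral is uniformly bounded by a constant. The necessary input is the standard decay of Bessel functions; no further space-time interpolation is needed.

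For \eqref{hom1}, since $f$ is radial I write $\hat f(\xi)=F(|\xi|)$ and use the Fourier--Bessel formula
\[
e^{it|D|}f(x)=c_n\int_0^\infty e^{it\rho}\,(r\rho)^{-(n-2)/2}J_{(n-2)/2}(r\rho)\,F(\rho)\,\rho^{n-1}\,d\rho,\qquad r=|x|.
\]
Plancherel in $t$ gives
\[
\|e^{it|D|}f(x)\|_{L^2_t}^2\simeq\int_0^\infty \bigl|(r\rho)^{-(n-2)/2}J_{(n-2)/2}(r\rho)\bigr|^2\,|F(\rho)|^2\,\rho^{2(n-1)}\,d\rho,
\]
and the classical bound $|J_\nu(s)|\lesssim\min(s^\nu,s^{-1/2})$ with $\nu=(n-2)/2$ yields
\[
(r\rho)^{-(n-2)/2}\bigl|J_{(n-2)/2}(r\rho)\bigr|\lesssim\min\bigl(1,(r\rho)^{-(n-1)/2}\bigr)\le 1,
\]
uniformly in $r,\rho\ge 0$. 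Since $\|f\|_{\dot H^{(n-1)/2}}^2\simeq\int_0^\infty|F(\rho)|^2\rho^{2(n-1)}d\rho$, taking the supremum in $x$ closes \eqref{hom1}.

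For \eqref{hom2} the multiplier of $|D|^{-1}\mathcal{D}$ is the matrix $\alpha\cdot\hat\xi$, which destroys radiality. I write
\[
\frac{e^{it|D|}}{|D|}\mathcal{D}f(x)=\int_0^\infty e^{it\rho}\,\Psi_\rho(x)\,F(\rho)\,\rho^{2}\,d\rho,\qquad
\Psi_\rho(x):=\int_{S^2}e^{i\rho\,x\cdot\omega}(\alpha\cdot\omega)\,d\omega,
\]
so that Plancherel in $t$ reduces matters to a uniform operator-norm bound on $\Psi_\rho(x)$. Choosing $x=re_3$, the contributions of $\alpha_1\omega_1$ and $\alpha_2\omega_2$ vanish by odd symmetry in $(\omega_1,\omega_2)$, leaving
\[
\Psi_\rho(re_3)=2\pi\alpha_3\int_{-1}^{1}e^{i\rho r u}\,u\,du=4\pi i\,\alpha_3\,j_1(\rho r),
\]
where $j_1(s)=\sin s/s^2-\cos s/s$ is the first spherical Bessel function. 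Since $\alpha_3$ has operator norm $1$ and $|j_1(s)|\lesssim\min(s,s^{-1})\le 1$, the rest of the argument is identical to the first one, using $\|f\|_{\dot H^1}^2\simeq\int_0^\infty|F(\rho)|^2\rho^4\,d\rho$.

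The only genuinely delicate step is the computation of $\Psi_\rho$: one must handle the matrix-valued integrand carefully so that the odd-parity cancellation of the tangential components can be invoked to produce a scalar Bessel kernel. This is exactly where the restriction $n=3$ enters, since in higher dimensions the corresponding $S^{n-1}$-integral would yield a different spherical Bessel function whose decay would have to be rematched against a different Sobolev exponent, and the clean uniform bound $\le 1$ would not come for free.
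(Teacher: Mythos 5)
Your computation correctly identifies the Fourier--Bessel kernel and its uniform bound, but the final step of ``taking the supremum in $x$'' does not give the estimate in the statement. Fixing $x$, applying Plancherel in $t$, and then taking the sup over $x$ produces a bound on
\[
\sup_{x}\left\|e^{it|D|}f(x)\right\|_{L^2_t}=\left\|e^{it|D|}f\right\|_{L^\infty_x L^2_t},
\]
whereas what is required is a bound on $\left\|e^{it|D|}f\right\|_{L^2_t L^\infty_x}$, where the spatial supremum is taken pointwise in $t$ and then integrated. By Minkowski's integral inequality (with $2\le\infty$) one has $\|u\|_{L^\infty_x L^2_t}\le\|u\|_{L^2_t L^\infty_x}$, so what you prove is strictly weaker and does not imply \eqref{hom1}. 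The same issue invalidates your treatment of \eqref{hom2}: the cancellation giving $\Psi_\rho(re_3)=4\pi i\,\alpha_3\,j_1(\rho r)$ is correct, but after Plancherel in $t$ you again land in $L^\infty_x L^2_t$.

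The paper closes this gap with a Hardy--Littlewood maximal function argument that is entirely absent from your proposal. From the radial representation one obtains, after $y=\cos\theta$ and then the rescaling $y\mapsto y/r$, the pointwise bound
\[
\left|e^{it|D|}f(x)\right|\lesssim \frac{1}{r}\int_{-r}^{r}\left|\widehat{g}(t+y)\right|dy\le 2\,M(\widehat{g})(t),\qquad g(\lambda)=\widehat{f}(\lambda)\lambda^{n-1}\mathbf{1}_{(0,\infty)}(\lambda),
\]
whose right-hand side is \emph{independent of} $x=re_3$. Because this bound is uniform in $x$, the supremum over $x$ can be taken inside the time integral before applying the $L^2$-boundedness of the maximal operator, which yields the genuine $L^2_t L^\infty_x$ estimate. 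Your uniform bound on $(r\rho)^{-(n-2)/2}J_{(n-2)/2}(r\rho)$ is consistent with this structure — the decay $(r\rho)^{-(n-1)/2}$ is what makes the averaged kernel a bounded operator — but Plancherel-in-$t$ alone cannot see the supremum over $x$; one needs the pointwise-in-$t$ maximal control. Without that step, the argument does not prove the stated lemma.
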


\begin{proof}
Using Fourier transform in spherical coordinates and the radiality of $f$ 
and setting $\rho=|x|$, $|\xi|=\lambda$, 
$x\cdot\xi=\rho\lambda\cos\theta$, we have
\begin{equation}\label{radialw}
e^{it|D|}f=\int e^{i(x\cdot\xi+t|\xi|)}\hat f(\xi)d\xi=
\\\int_0^\infty\int_0^\pi e^{i\lambda(t+\rho\cos\theta)}\hat f(\lambda)\lambda^{n-1}(\sin\theta)^{n-2} d\theta d\lambda.
\end{equation}
With the change of variable $y=\cos\theta$ in (\ref{radialw}) we obtain
$$
=\displaystyle\int_{\mathbb{R}}d\lambda\: e^{it\lambda}g(\lambda)\int_{-1}^1e^{i\lambda\rho y}(1-y^2)^\frac{n-3}{2}dy
$$
with $g(\lambda)=\hat f(\lambda)\lambda^{n-1}H(\lambda)$ ($H$ represents the classical Heaviside function).
Now changing the order of the integrals we obtain
$$
=\displaystyle\int_{-1}^1dy\:(1-y^2)^\frac{n-3}{2}\int_{-\infty}^{+\infty}e^{i\rho(t+\lambda y)}g(\rho)d\rho=
\displaystyle\int_{-1}^1dy\:(1-y^2)^\frac{n-3}{2}\hat g(t+\lambda y)
$$
Since for $n\geq3$ one has
$\displaystyle(1-y^2)^\frac{n-3}{2}\leq1$, the change of variable $y\rightarrow y/r$ yields again\begin{equation}\label{max}
\displaystyle\leq
\frac{1}{r}\int_{-r}^r\hat g(t+y)dy=M(\hat g)(t)
\end{equation}
where $M$ denotes the standard maximal operator.
Then we have for all $t$
\begin{equation}\label{bound1}
\| e^{it|D|}f\|_{L^\infty_x}\lesssim M(\hat g)(t),
\end{equation}
and thus by the $L^p$-boundedness of maximal operator and Plancherel's theorem
\begin{equation}
\displaystyle\| e^{it|D|}f\|_{L^2_t L^\infty_x}\lesssim \| g\|_{L^2_t}=
\left(\int_0^\infty(\lambda^{n-1}|\hat f|)^2d\lambda\right)^\frac{1}{2}=
\left(\int_0^\infty\left|\lambda^{\frac{n-1}{2}}|\hat f|\right|^2\lambda^{n-1}d\lambda\right)^\frac{1}{2}=
\end{equation}
$$
=\| \lambda^{\frac{n-1}{2}}\hat f\|_{L^2}=\| f\|_{\dot{H}^\frac{n-1}{2}}
$$
which gives (\ref{hom1}). 

We now turn to estimate (\ref{hom2}), for which the calculations are 
similar. 
Indeed we can write (here we are fixing $n=3$)
\begin{equation}\label{repDF}
\displaystyle\frac{e^{it|D|}}{|D|}\mathcal{D}f=\int e^{i(x\cdot\xi+t|\xi|)}
(\alpha\cdot\hat\xi)\hat f(\xi)d\xi=
\end{equation}
where
\begin{equation*}
  \alpha\cdot\hat\xi=\sum_{k=1}^3\frac{(\alpha_k\cdot\xi_k)}{|\xi|}.
\end{equation*}
Using spherical coordinates as before we have
\begin{equation}\label{dirrad}
\displaystyle\int_0^\infty d\lambda\int_0^{2\pi}d\phi\int_0^\pi d\theta
e^{i\lambda(t+\rho\cos\theta)}A(\theta,\phi)\hat f(\lambda)
\lambda^2\sin\theta
\end{equation}
with the operator $A(\theta,\phi)=\alpha_1\cos\theta+\alpha_2\sin\theta\cos\phi+\alpha_3
\sin\theta\sin\phi$.
Observing that 
$$
\displaystyle\int_0^{2\pi}\alpha_2\sin\theta\cos\phi\:d\phi=
\displaystyle\int_0^{2\pi}\alpha_3\sin\theta\sin\phi\:d\phi=0,
$$
we see that (\ref{dirrad}) is equal to
$$
\cong\displaystyle\int_0^\infty d\lambda\int_0^\pi d\theta
e^{i\lambda(t+\rho\cos\theta)}\alpha_1\cos\theta\hat f(\lambda)\lambda^2\sin\theta.
$$
Setting as before $g(\lambda)=\hat f(\lambda)\lambda^2H(\lambda)$, changing variable $\cos\theta\rightarrow y$ and then $y\rightarrow y/r$ yield
\begin{equation}\label{max2}
=\displaystyle\int_{-1}^1dy\left(\alpha_1\cdot y\right)\int_{-\infty}^{+\infty}d\rho \:e^{i\lambda(t+\rho y)}g(\rho)=
\frac{1}{r}\int_{-r}^r\left(\alpha_1\cdot\frac{y}{r}\right)\hat g(t+y)\:dy
\cong
c M(\hat g)(t)
\end{equation}
since the term $\displaystyle\left(\alpha_1\cdot\frac{y}{r}\right)$ is bounded, and so we have the bound
\begin{equation}\label{bound2}
\left\| \frac{e^{it|D|}}{|D|}\mathcal{D}f\right\|_{L^\infty_x}
\lesssim M(\hat g)(t).
\end{equation}
The
$L^p$-boundedness of maximal operator and Placherel Theorem yield as above 
\begin{equation}
\left\|\frac{e^{it|D|}}{|D|}\mathcal{D}f\right\|_{L^2_tL^\infty_x}
\lesssim\| f\|_{\dot H^1}
\end{equation}
which gives the desired estimate (\ref{hom2}).
\end{proof}

Combining estimates (\ref{hom1}) and (\ref{hom2}) and using representation (\ref{dirsol}) for the solution of the free Dirac system 
we obtain estimate (\ref{homdir}).

\section{The mixed endpoint-smoothing estimate}\label{sec:omo}

We consider now the non homogeneous equation
\begin{equation}\label{dirnono}
iu_t-\mathcal{D}u=F(t,x),\qquad
u(0,x)=0.
\end{equation}
By Duhamel's formula and the
representation (\ref{dirsol}) we can write the solution $u$ as
\begin{equation}\label{nonomrep}
\displaystyle u(t,x)=\int_0^t e^{i(t-s)\mathcal{D}}F(s,x)ds=
\end{equation}
$$
=\int^t_0\left(\cos((t-s)|D|)F(s,x)+
i\frac{\sin((t-s)|D|)}{|D|}\mathcal{D}F(s,x)
\right)ds.
$$
Thus in order to estimate the solution $u$ to (\ref{dirnono}) we can deal separately with the two integrals
$$
\displaystyle\int_0^t e^{i(t-s)|D|}F(s,x)ds\quad\textrm{and}\quad
\int_0^t\frac{e^{i(t-s)|D|}}{|D|}\mathcal{D}F(s,x)ds.
$$
We prove the following:

\begin{proposition}\label{prop2}
Let $n=3$ and assume $F(t,x)$ has the structure
\begin{equation}\label{Fform}F(t,x)=F_1(|x|)\mathbb{I}_4+i\beta(\alpha\cdot\hat{x})F_2(|x|).
\end{equation}
Then the following estimate holds
\begin{equation}\label{stdir}
\displaystyle\left\|\int_0^t e^{i(t-s)\mathcal{D}}F(s)\:ds\right\|_{L^2_tL^\infty_x}\lesssim
\| \langle x\rangle^{\frac{1}{2}+}|D| F\|_{L^2_tL^2_x}.
\end{equation}
\end{proposition}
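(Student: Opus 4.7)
The plan is to split $u$ via the wave representation for the Dirac flow, reduce each piece to a wave-equation problem on a radial function using the algebraic structure imposed by \eqref{Fform}, and then combine the radial endpoint bounds of Lemma \ref{lem1} with the classical Kato smoothing estimate for the wave flow via a $TT^*$ duality argument.

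First, write $u=v_1+iv_2$ with $v_1=\int_0^t\cos((t-s)|D|)F(s)\,ds$ and $v_2=\int_0^t\frac{\sin((t-s)|D|)}{|D|}\mathcal{D}F(s)\,ds$. The hypothesis \eqref{Fform}, together with the identities $\beta(\alpha\cdot\hat{x})=-(\alpha\cdot\hat{x})\beta$, $(\alpha\cdot\hat{x})^2=\mathbb{I}_4$ and $\mathcal{D}^2=|D|^2$, allows us to express $F=F_1+\mathcal{D}G$ and $\mathcal{D}F=\mathcal{D}F_1+|D|^2 G$, where $G(s,r):=\int^r\beta F_2(s,\rho)\,d\rho$ is radial in $x$. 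Substituting and regrouping, $v_1+iv_2$ becomes a sum of four pieces of the form $\int_0^t Q((t-s)|D|)h(s)\,ds$, with $h$ radial (either $F_1$ or $|D|G$) and $Q$ one of $\cos$, $\sin$, $\frac{\cos}{|D|}\mathcal{D}$ or $\frac{\sin}{|D|}\mathcal{D}$.

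Each such piece is bounded by a $TT^*$ composition. The radial homogeneous estimates \eqref{hom1} and \eqref{hom2} give $\|e^{it|D|}g\|_{L^2_tL^\infty_x}\lesssim\|g\|_{\dot H^1}$ and $\|\frac{e^{it|D|}}{|D|}\mathcal{D}g\|_{L^2_tL^\infty_x}\lesssim\|g\|_{\dot H^1}$ for radial $g$ on $\mathbb{R}^3$, while the standard Kato smoothing estimate $\||x|^{-1/2-}e^{it|D|}f\|_{L^2_tL^2_x}\lesssim\|f\|_{L^2_x}$ dualises to $\|\int_\mathbb{R} e^{-is|D|}H(s)\,ds\|_{L^2_x}\lesssim\|\langle x\rangle^{1/2+}H\|_{L^2_tL^2_x}$. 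Writing $\int_\mathbb{R}e^{i(t-s)|D|}h(s)\,ds=e^{it|D|}\int_\mathbb{R}e^{-is|D|}h(s)\,ds$ and moving a factor of $|D|$ inside, the chain of these two bounds yields
$$\left\|\int_\mathbb{R}e^{i(t-s)|D|}h(s)\,ds\right\|_{L^2_tL^\infty_x}\lesssim\|\langle x\rangle^{1/2+}|D|h\|_{L^2_tL^2_x},$$
and analogously for the $\frac{\mathcal{D}}{|D|}$-composed operator. Summing over the four pieces and using $\|\mathcal{D}\,\cdot\,\|_{L^2}=\||D|\,\cdot\,\|_{L^2}$, with elementary commutator bookkeeping for the weight, gives the desired bound.

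The main obstacle is the passage from $\int_\mathbb{R}$ to the retarded integral $\int_0^t$: since both time exponents are $L^2$, the Christ--Kiselev lemma does not directly apply, so this step requires either a direct bilinear argument on the retarded kernel or a time-symmetrisation trick exploiting the unitarity of $e^{it|D|}$. A secondary technical point is the careful tracking of the $|D|$ factors when converting between the radial components $F_1,|D|G$ and $F$ through $\mathcal{D}$, ensuring the final weighted norm is controlled by $\|\langle x\rangle^{1/2+}|D|F\|_{L^2_tL^2_x}$.
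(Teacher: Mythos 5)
Your proposal is not complete, and the gap you flag at the end is indeed the crux of the matter. The $TT^*$ chain (radial endpoint Strichartz $\circ$ Kato smoothing) naturally produces the \emph{untruncated} estimate
\[
\left\|\int_{\mathbb R}e^{i(t-s)|D|}h(s)\,ds\right\|_{L^2_tL^\infty_x}\lesssim\|\langle x\rangle^{1/2+}|D|h\|_{L^2_tL^2_x},
\]
but Proposition~\ref{prop2} needs the retarded integral $\int_0^t$, and — as you yourself observe — the Christ--Kiselev lemma is unavailable here precisely because the source and target spaces share the endpoint exponent $L^2_t$. You name two possible ways out (a direct bilinear argument, or a time-symmetrisation trick), but you do not carry out either, and neither is routine in this setting: the passage from $\int_{\mathbb R}$ to $\int_0^t$ is a genuine piece of work, not bookkeeping. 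So the proof, as written, establishes a weaker statement than the one claimed.

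The paper avoids this obstacle entirely by never decoupling the two halves of the $TT^*$ composition. It works directly on the retarded integral: for fixed $t$, the $L^\infty_x$ norm of $\int_0^te^{i(t-s)|D|}F(s)\,ds$ is dominated by a maximal-function average of $\int_0^t|\widehat G(s,y+t-s)|\,ds$, and then Cauchy--Schwarz is applied \emph{in $s$} against the weight $\langle y+t-s\rangle^{\pm(1/2+)}$. The crucial point is that the bad factor, $\bigl(\int_0^t\langle y+t-s\rangle^{-1-}ds\bigr)^{1/2}$, is uniformly bounded — extending $\int_0^t$ to $\int_{\mathbb R}$ here costs nothing — while the good factor, $\bigl(\int_{\mathbb R}|\widehat G(s,y+t-s)\langle y+t-s\rangle^{1/2+}|^2ds\bigr)^{1/2}$, is handled by the $L^2$-boundedness of the maximal operator followed by a weighted Fourier estimate (inequality \eqref{interpol}) proved by interpolation between $k=0$ (Plancherel) and $k=1$ (Hardy's inequality). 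No smoothing estimate for the free wave flow is invoked; the $\langle x\rangle^{1/2+}$ weight emerges from the Cauchy--Schwarz step and the interpolation inequality.

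A secondary point: your algebraic reduction $F=F_1+\mathcal D G$, $G=\beta\int^r F_2$, is plausible, but the subsequent control of $\|\langle x\rangle^{1/2+}|D|(|D|G)\|_{L^2}$ by $\|\langle x\rangle^{1/2+}|D|F\|_{L^2}$ is not ``elementary bookkeeping'': $|D|^2G=-\Delta G=\beta(F_2''+\tfrac2rF_2')$ wait — more precisely $\beta(F_2'+\tfrac2rF_2)$ if $G'=\beta F_2$ — involves a singular weight $1/r$ that must be absorbed via Hardy-type inequalities with the $\langle x\rangle^{1/2+}$ weight present, and $|D|F$ is a nonlocal fractional derivative, not a pointwise first derivative. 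The paper instead keeps the four pieces in the form $\int_0^t\frac{e^{i(t-s)|D|}}{|D|^{1-j}}A_jF_{\mathrm{rad}}(s)\,ds$ with $A_j=i\beta^j(\alpha\cdot\hat x)$ a bounded angular multiplier, and feeds each directly into the same maximal-function argument; this sidesteps the need to commute weights past $|D|^2$.
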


The key
step in the proof of (\ref{stdir}) is the following 
non homogeneous estimate for the wave propagator with a radial term.

\begin{lemma}\label{nonomwave}
Let $n\geq3$, $F(t,\cdot)$ be a radial function. Then the following estimate holds
\begin{equation}\label{st1}
\displaystyle\left\|\int_0^t e^{i(t-s)|D|}F(s)\:ds\right\|_{L^2_tL^\infty_x}\lesssim
\| \langle x\rangle^{\frac{1}{2}+}|D|^\frac{n-1}{2} F\|_{L^2tL^2_x}.
\end{equation}
\end{lemma}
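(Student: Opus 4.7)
The plan is to run a $TT^{*}$-type argument that combines the radial homogeneous endpoint estimate \eqref{hom1} from Lemma \ref{lem1} with a Morawetz--Kato smoothing estimate for the free half-wave propagator, and then to pass from the resulting non-retarded bound to the retarded Duhamel integral appearing in the statement.

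Concretely, I would first consider the non-retarded analogue
\[
\int_{\mathbb{R}}e^{i(t-s)|D|}F(s)\,ds \;=\; e^{it|D|}h,\qquad h:=\int_{\mathbb{R}} e^{-is|D|}F(s)\,ds,
\]
and observe that, since $F(s,\cdot)$ is radial for every $s$, the function $h$ is radial as well, so the homogeneous endpoint \eqref{hom1} gives
\[
\Big\|\int_{\mathbb{R}}e^{i(t-s)|D|}F(s)\,ds\Big\|_{L^{2}_{t}L^{\infty}_{x}}\lesssim \|h\|_{\dot H^{(n-1)/2}}.
\]
Setting $G:=|D|^{(n-1)/2}F$, one has $|D|^{(n-1)/2}h=\int e^{-is|D|}G(s)\,ds$, and by $L^{2}$-duality
\[
\|h\|_{\dot H^{(n-1)/2}}=\sup_{\|\psi\|_{L^{2}}=1}\Big|\int \langle G(s),\,e^{is|D|}\psi\rangle_{L^{2}_{x}}\,ds\Big|.
\]
Inserting the weight pair $\langle x\rangle^{1/2+}\cdot\langle x\rangle^{-1/2-}$ inside the inner product and applying Cauchy--Schwarz, matters reduce to the Morawetz--Kato smoothing bound
\[
\|\langle x\rangle^{-1/2-}\,e^{it|D|}\psi\|_{L^{2}_{t}L^{2}_{x}}\lesssim\|\psi\|_{L^{2}}\qquad (n\geq 3),
\]
a classical consequence of the Morawetz--Strauss inequality for the free wave equation.

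To move from this non-retarded estimate to the retarded integral in the statement I would apply the Christ--Kiselev lemma; since the time exponent $L^{2}_{t}$ is the same on both sides, a dyadic decomposition of the $x$-weight (which is strictly stronger at infinity than the scale-critical $|x|^{1/2}$) is used to break the endpoint symmetry in time and make Christ--Kiselev applicable. The step I expect to be the main obstacle is precisely this smoothing estimate at low frequency, where $|D|$ degenerates and the propagator loses its dispersive character; this is the reason one picks $\langle x\rangle^{1/2+}$ rather than the scale-critical $|x|^{1/2}$, so that the strictly faster decay of $\langle x\rangle^{-1/2-}$ at infinity absorbs the low-frequency loss.
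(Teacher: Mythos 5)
Your strategy is genuinely different from the paper's: the paper never performs a $TT^{*}$ argument or invokes a smoothing estimate for $e^{it|D|}$, and it never passes through a non-retarded estimate. Instead it expands the Duhamel integral directly in spherical Fourier coordinates, bounds the $L^{\infty}_{x}$ norm at fixed $t$ by a one-dimensional maximal function $M(h)(t)$ (exactly as in the proof of \eqref{hom1}), inserts the factor $\langle y+t-s\rangle^{1/2+}\langle y+t-s\rangle^{-1/2-}$ and applies Cauchy--Schwarz in $s$ only, notices that after the change of variables the relevant quantity depends on $y+t$ alone so that the $L^{2}_{t}$ norm of $M(h)$ is precisely $\|\widehat G_{1}\|_{L^{2}_{s}L^{2}_{y}}$, and finally proves the weighted Plancherel-type inequality \eqref{interpol} by interpolation between $k=0$ and $k=1$ (the latter via Hardy's inequality) to convert the Fourier-side weight $\langle y\rangle^{1/2+}$ into the physical-space weight $\langle x\rangle^{1/2+}$. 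This is self-contained and handles the retarded integral on the spot, with no truncation lemma.

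The non-retarded half of your argument is essentially sound: the identity $\int_{\mathbb R}e^{i(t-s)|D|}F(s)\,ds=e^{it|D|}h$ with $h$ radial, the reduction to $\|h\|_{\dot H^{(n-1)/2}}$ via \eqref{hom1}, the duality step, and the Kato smoothing bound $\|\langle x\rangle^{-1/2-}e^{it|D|}\psi\|_{L^{2}_{t}L^{2}_{x}}\lesssim\|\psi\|_{L^{2}}$ (which does hold for $n\ge 3$, including at low frequencies since the group velocity of the half-wave flow does not degenerate there) all go through. The genuine gap is the final step. Christ--Kiselev requires a strict inequality between the time exponents, and here the input and output are both $L^{2}_{t}$: the hypothesis of the lemma fails at this endpoint. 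Your proposed remedy --- a dyadic decomposition of the spatial weight to ``break the endpoint symmetry in time'' --- is not spelled out and, as stated, does not resolve the issue: gaining a power $R^{\epsilon}$ on the dyadic shell $|x|\sim R$ does not change the $L^{2}_{t}\to L^{2}_{t}$ structure of the operator, and the standard mechanisms for truncating $L^{2}_{t}$-to-$L^{2}_{t}$ smoothing-type operators (as in D'Ancona--Fanelli) apply when the \emph{same} weighted $L^{2}_{x}$ space appears on both sides, not when one side is $L^{\infty}_{x}$. Until you supply an actual argument that the retarded integral inherits the bound --- either by replacing the black-box Christ--Kiselev step with a concrete kernel computation, or by following the paper's direct maximal-function route, which builds the truncation $\int_{0}^{t}$ into the computation from the start --- the proof is incomplete.
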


\begin{proof}
We start with (\ref{st1}). Expanding $u$ as in the homogeneous case (see formulas (\ref{radialw} and (\ref{max})), from the radiality of $F$ we can estimate the $L^\infty$ norm of the solution at fixed $t$ as (here  $\widehat G(s,\lambda)=\lambda^{n-1}\widehat F(s,\lambda)H(\lambda)$ and $H$ is the Heaviside function)
$$
\displaystyle\| u\|_{L^\infty_x}\lesssim\sup_r\frac{1}{r}\int_{-r}^r\left(\int_0^t\left|\widehat G(s,y+t-s)\right|ds\right)dy=
$$
$$
=\sup_r\frac{1}{r}\int_{-r}^r\left(\int_0^t\left|\widehat G(s,y+t-s)\right|\langle y+t-s\rangle^{\frac{1}{2}+}\langle y+t-s\rangle^{-\frac{1}{2}-}ds\right)dy\lesssim
$$
$$
\lesssim\displaystyle\sup_r\frac{1}{r}\int_{-r}^rdy\left[\left(\int_\mathbb{R}\left|\widehat G_{1}(s,y+t-s)\right|^2ds\right)^\frac{1}{2}
\cdot\left(\int_0^t \langle y+t-s\rangle^{-1-}ds\right)^\frac{1}{2}
\right]
$$
where in the last inequality we have used Cauchy-Schwarz inequality 
and $G_{1}$ is the function defined by
$$
\widehat G_{1}(s,y)=\widehat G(s,y)\langle y\rangle^{\frac{1}{2}+}.
$$
Setting now $h(z):=\displaystyle\left(\int_\mathbb{R}\left|\widehat G_{1}(s,z-s)\right|^2ds\right)^\frac{1}{2}$ we have 
$$
\displaystyle\sup_r\frac{1}{r}\int_{-r}^r dy\left(\int_\mathbb{R}\left|\widehat G_{1}(s,y+t-s)\right|^2ds\right)^\frac{1}{2}=
M(h)(t)
$$
The $L^p$ boundedness of the maximal operator yields
$$\
\| u\|_{L^2_tL^\infty_x}\lesssim\| h(t)\|_{L^2_t}=
\displaystyle\left(\int\int\left|\widehat G_{1}(s,t-s)^2\right|^2dsdt\right)^\frac{1}{2}=
\| \widehat G_{1}\|_{L^2_sL^2_y}
$$
The last quantity is precisely
$$
\|\langle y\rangle^{\frac{1}{2}+}\mathcal{F}_{\lambda\rightarrow y}\left(
\lambda^{n-1}\widehat F(s,\lambda)H(\lambda)\right)\|_{L^2_sL^2_y}
$$
and to conclude the proof we need to estimate it by
\begin{equation*}
\lesssim
\|\bra{x}^{\frac12+}|D|^\frac{n-1}{2}F\|_{L^{2}_{s}L^{2}_{x}}.
\end{equation*}
Since $\mathcal{F}^{-1}(|D|^\frac{n-1}{2}f)=|\xi|^\frac{n-1}{2}\check{f}$ 
we see that it is enough to prove the general inequality 
(we can neglect the dependence on time)
\begin{equation}\label{interpol}
\|\langle\rho\rangle^k\mathcal{F}_{\lambda\rightarrow\rho}\left(
\lambda^{\frac{n-1}{2}}\widehat f(\lambda) H(\lambda)\right)\|_{L^2_\rho}\lesssim
\|\langle x\rangle^k f\|_{L^2}
\end{equation}
for $k=1/2+$. 

We prove (\ref{interpol}) by interpolation. 
The case $k=0$ is trivial, since from Placherel's Theorem we obviously have
\begin{equation}\label{interpol1}
\|\mathcal{F}_{\lambda\rightarrow\rho}\left(
\lambda^{\frac{n-1}{2}}\widehat f(\lambda)H(\lambda)\right)\|_{L^2}
\cong\|\lambda^{\frac{n-1}{2}}\widehat f(\lambda)\|_{L^2}=\| f\|_{L^2}.
\end{equation}
The case $k=1$ is just a little more complicated. Since of course $\langle\rho\rangle\le 1+|\rho|$, we need only prove that
$$
\| \rho \:\mathcal{F}_{\lambda\rightarrow\rho}\left(
\lambda^{\frac{n-1}{2}}\widehat f(\lambda)H(\lambda)\right)\|_{L^2}\lesssim
\|\langle x\rangle f\|_{L^2}
$$
or equivalently
\begin{equation}
\| \partial_\lambda\left(
\lambda^{\frac{n-1}{2}}\widehat f(\lambda)H(\lambda)\right)\|_{L^2}\lesssim
\|\langle x\rangle f\|_{L^2}.
\end{equation}
We write
$$
\displaystyle\| \partial_\lambda\left(
\lambda^{\frac{n-1}{2}}\widehat f(\lambda)\chi_{\mathbb{R}^+}(\lambda)\right)\|_{L^2}\lesssim
\|\lambda^\frac{n-1}{2}\partial_\lambda\hat f(\lambda)\|_{L^2_+}+\|
\lambda^{\frac{n-3}{2}}\hat f(\lambda)\|_{L^2_+}=I_1+I_2
$$
with the shorthand notation $L^2_+=L^2(0,\infty)$.
For $I_1$ we trivially have
$$
I_1=\|\lambda^\frac{n-1}{2}\widehat{(\rho f)}\|_{L^2}\cong\| |x|f\|_{L^2}.
$$
Let's now turn to $I_2$. We split the norm
$$
\displaystyle\|
\lambda^{\frac{n-3}{2}}\hat f(\lambda)\|_{L^2_+}=
\|
\lambda^{\frac{n-3}{2}}\hat f(\lambda)\|_{L^2(\{\lambda\geq1\})}
+
\|
\lambda^{\frac{n-3}{2}}\hat f(\lambda)\|_{L^2(\{\lambda<1\})}.
$$
Plancherel's Theorem yields again for the first term
$$
\|
\lambda^{\frac{n-3}{2}}\hat f(\lambda)\|_{L^2(\{\lambda\geq1\})}\leq
\|\lambda^\frac{n-1}{2}\hat f\|_{L^2(\mathbb{R})}\cong\| f \|_{L^2}.
$$
To handle the second term we use Hardy's inequality:
$$
\|
\lambda^{\frac{n-3}{2}}\hat f(\lambda)\|_{L^2(\{\lambda<1\})}=
\||\xi|^{-1}\hat f\|_{L^2(\{|\xi|<1\})}\lesssim
\|\nabla_\xi\hat f\|_{L^2}\simeq\||x|f\|_{L^2}
$$
and this concludes the case $k=1$. 
By interpolation with (\ref{interpol1}) we obtain
the desired estimate  (\ref{st1}).
\end{proof}

\begin{lemma}\label{nonomowave2}
Let $n=3$ and $F(t,\cdot)$ be of the form (\ref{Fform}). 
Then the following estimate holds
\begin{equation}\label{st2}
\displaystyle\left\|\int_0^t \frac{e^{i(t-s)|D|}}{|D|^k}\mathcal{D}^kF(s)\:ds\right\|_{L^2_tL^\infty_x}\lesssim
\| \langle x\rangle^{\frac{1}{2}+}|D| F\|_{L^2tL^2_x}.
\end{equation}
for $k=0,1$.
\end{lemma}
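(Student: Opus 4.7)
I would decompose the source $F$ according to its structural form $F = F_1(|x|)\mathbb{I}_4 + i\beta(\alpha\cdot\hat x)F_2(|x|)$ and handle $k=0$ and $k=1$ separately; in each case two pieces appear, one purely radial (to which Lemma \ref{nonomwave} applies directly) and one of the form ``$(\alpha\cdot\hat\xi)\times$(radial)'' in Fourier variables, requiring a hybrid of the spherical-coordinates argument used for (\ref{hom2}) and the nonhomogeneous argument of Lemma \ref{nonomwave}.

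For $k=0$ the first piece $\int_0^t e^{i(t-s)|D|}F_1(s,|x|)\mathbb{I}_4\,ds$ is handled directly by Lemma \ref{nonomwave} applied componentwise. For the second piece I would start from the Fourier identity
\begin{equation*}
\widehat{(\alpha\cdot\hat x)F_2(|\cdot|)}(\xi)=i(\alpha\cdot\hat\xi)\,\widetilde F_2(|\xi|),
\end{equation*}
valid because $F_2$ is radial (with $\widetilde F_2$ a radial profile obtained from $F_2$ by one radial differentiation on the Fourier side), and expand in spherical coordinates exactly as in (\ref{radialw})--(\ref{dirrad}). The only novelty with respect to Lemma \ref{nonomwave} is the extra bounded angular factor $A(\theta,\phi)=\alpha_1\cos\theta+\alpha_2\sin\theta\cos\phi+\alpha_3\sin\theta\sin\phi$: integration over $\phi$ cancels the last two terms and leaves the bounded symbol $\alpha_1\cos\theta$, exactly as in (\ref{max2}). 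After this step the remaining ingredients of Lemma \ref{nonomwave}---Cauchy-Schwarz in $s$, redistribution of the $\langle y+t-s\rangle^{\pm(1/2+)}$ weights, maximal-function bound, Plancherel, and the one-dimensional estimate (\ref{interpol})---go through unchanged.

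For the case $k=1$ I would commute $\mathcal{D}/|D|$ past $e^{i(t-s)|D|}$ (both are Fourier multipliers) and rewrite $(\alpha\cdot\hat\xi)\hat F(\xi)$ using the anticommutation relations (\ref{comm1})--(\ref{comm2}) and $(\alpha\cdot\hat\xi)^2=\mathbb{I}_4$. The resulting identity $(\alpha\cdot\hat\xi)\beta(\alpha\cdot\hat\xi)=-\beta$ gives
\begin{equation*}
(\alpha\cdot\hat\xi)\hat F(\xi)=(\alpha\cdot\hat\xi)\hat F_1(|\xi|)\mathbb{I}_4+\beta\,\widetilde F_2(|\xi|),
\end{equation*}
so that $\mathcal{D}F/|D|$ has the very same algebraic shape as $F$ itself with the two pieces exchanged: the $\beta\widetilde F_2$ term is scalar-radial up to the constant matrix $\beta$ and is bounded by Lemma \ref{nonomwave}, whereas the $(\alpha\cdot\hat\xi)\hat F_1$ term has exactly the form already treated in the $k=0$ case. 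The main technical delicacy throughout is to track the weighted norms of the auxiliary radial profiles back to $\|\langle x\rangle^{1/2+}|D|F\|_{L^2_tL^2_x}$: each $(\alpha\cdot\hat x)$ factor produces one extra radial derivative on the Fourier side, so the weight $\langle x\rangle^{1/2+}$ and the derivative order $|D|$ appear to be exactly the right amount to close the estimate through the same weighted Hardy/Plancherel argument used in the proof of (\ref{interpol}), but the matching is borderline and will likely be the most delicate part of the argument.
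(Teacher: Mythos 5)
Your proposal follows essentially the same route as the paper: decompose $F$ by its algebraic form, send the radial pieces to Lemma \ref{nonomwave}, and treat the $(\alpha\cdot\hat\xi)\times(\text{radial})$ pieces by the angular-integration/maximal-function computation of \eqref{repDF}--\eqref{max2} combined with the weighted Duhamel argument of Lemma \ref{nonomwave}; your identity $(\alpha\cdot\hat\xi)\beta(\alpha\cdot\hat\xi)=-\beta$ for $k=1$ is just a cleaner Fourier-side version of the paper's remark that $\mathcal{D}$ sends $i\beta(\alpha\cdot\hat x)\phi$ to a radial function times $\beta$. The one step you flag as delicate --- controlling the modified radial profile $\widetilde F_2$ (essentially a radial derivative of $\widehat{F_2/r}$) in the weighted norm by $\|\langle x\rangle^{\frac12+}|D|F\|_{L^2_tL^2_x}$ --- is a genuine point, but the paper does not address it either: it simply takes $\widehat G(s,\lambda)=\lambda^{2}\widehat F_{rad}(s,\lambda)H(\lambda)$ as though $\widehat{A_jF_{rad}}$ were a bounded symbol times $\widehat F_{rad}(|\xi|)$ and then proceeds ``exactly as in'' Lemma \ref{nonomwave}, so your write-up is, if anything, more explicit than the source on this point.
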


\begin{proof}
Since the operator $i\mathcal{D}\beta(\alpha\cdot\hat{x})\phi$ 
applied to a radial function $\phi$ produces the radial
function $i\beta\phi''$, Lemma (\ref{nonomwave}) holds, 
and we only need to control terms of the
form (where $F_{rad}$ denotes a radial function)
$$
\left\|\int_0^t \frac{e^{i(t-s)|D|}}{|D|^{1-j}}A_jF_{rad}(s)\:ds\right\|_{L^2_tL^\infty_x}
$$
with $A_j=i\beta^j(\alpha\cdot\hat{x})$, $j=0,1$.
Recalling (\ref{repDF})-(\ref{max2}), since the quantities
$A_j$ are obviously bounded, we can estimate in both cases with
$$
  \left|\int_0^t \frac{e^{i(t-s)|D|}}{|D|^{1-j}}A_jF_{rad}(s)ds\right|
  \lesssim
  \int_0^t\frac{1}{r}\int_{-r}^r|\widehat G(s,y+t-s)|\:dsdy
$$
where as before $\widehat G(s,\lambda)=\lambda^{2}\widehat F_{rad}(s,\lambda)H(\lambda)$.
Proceeding exactly as in the proof of Lemma (\ref{nonomwave}) we obtain estimate (\ref{st2}).
\end{proof}

Estimate (\ref{stdir}) is an immediate consequence of 
(\ref{st1}), (\ref{st2}) and representation (\ref{nonomrep})

\section{Proof of Theorem (\ref{teo1})}\label{sec:omo}

We now turn to the proof of Theorem (\ref{teo1}). 
This is based on a simple application of a smoothing estimate
for a Dirac equation with potential proved in \cite{DanconaFanelli08-a}
(see also \cite{DanconaFanelli07-a}, \cite{DanconaFanelli06-a}
for related results):

\begin{theorem}[\cite{DanconaFanelli08-a}]\label{danfan}
Let $V(x)=V(x)^*$ be a $4\times4$ complex valued matrix and
assume that for some $\sigma>1$ and some sufficiently
small $\delta>0$ one has
\begin{equation}\label{Vhp2}
|V(x)|\leq\frac{\delta}{w_\sigma(x)}\qquad
\text{where}\quad
w_\sigma(x)=|x|(1+|\log|x||)^\sigma
\end{equation}
Then the following smoothing estimate holds:
\begin{equation}\label{smooth}
\| w_\sigma^{-1/2}e^{it(\mathcal{D}+V)}f\|_{L^2_tL^2_x}\lesssim\| f\|_{L^2}.
\end{equation}
\end{theorem}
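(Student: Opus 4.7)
The plan is to prove the estimate first for the free Dirac propagator $e^{it\mathcal{D}}$ and then pass to the perturbed flow $e^{it(\mathcal{D}+V)}$ via a Duhamel/bootstrap argument exploiting the smallness of $\delta$. The self-adjointness assumption on $V$ under \eqref{Vhp2} is used only to make sense of $e^{it(\mathcal{D}+V)}$ as a unitary group.

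For the free estimate, I would exploit the representation \eqref{dirsol}
$$e^{it\mathcal{D}}f=\cos(t|D|)f+i\frac{\sin(t|D|)}{|D|}\mathcal{D}f$$
to reduce matters to the half-wave group. Since $|D|^{-1}\mathcal{D}$ is a matrix of Riesz-type transforms and hence bounded on $L^2(\mathbb{R}^3)^4$, it suffices to prove the scalar Kato--Morawetz smoothing
$$\|w_\sigma^{-1/2}e^{it|D|}g\|_{L^2_tL^2_x}\lesssim \|g\|_{L^2},\qquad g\in L^2(\mathbb{R}^3).$$
The standard route is Kato's theorem: show the uniform resolvent bound
$$\sup_{\lambda>0}\bigl\|w_\sigma^{-1/2}(|D|-\lambda\mp i0)^{-1}w_\sigma^{-1/2}\bigr\|_{L^2\to L^2}<\infty,$$
via a dyadic Littlewood--Paley decomposition in frequency, combined with classical Agmon--Kato--Kuroda weighted resolvent estimates at intermediate and high frequency and a weighted Hardy inequality at low frequency.

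For the perturbative step, write $u(t)=e^{it(\mathcal{D}+V)}f$ and apply Duhamel:
$$u(t)=e^{it\mathcal{D}}f-i\int_0^t e^{i(t-s)\mathcal{D}}V u(s)\,ds.$$
A standard $TT^*$/duality argument upgrades the free smoothing into the inhomogeneous bound
$$\Bigl\|w_\sigma^{-1/2}\int_0^t e^{i(t-s)\mathcal{D}}F(s)\,ds\Bigr\|_{L^2_tL^2_x}\lesssim \|w_\sigma^{1/2}F\|_{L^2_tL^2_x},$$
the retardedness in $s$ being absorbed by the Christ--Kiselev lemma (or, more simply, by noting that the corresponding untruncated operator already satisfies the same bound by symmetry). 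The pointwise hypothesis $|V(x)|\le\delta/w_\sigma(x)$ gives $|w_\sigma^{1/2}Vu|\le \delta\,w_\sigma^{-1/2}|u|$, whence
$$\|w_\sigma^{-1/2}u\|_{L^2_tL^2_x}\le C\|f\|_{L^2}+C\delta\,\|w_\sigma^{-1/2}u\|_{L^2_tL^2_x};$$
choosing $\delta<1/(2C)$ absorbs the last term and yields \eqref{smooth}.

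The main obstacle is really the free resolvent estimate with the sharp weight $w_\sigma$. In $\mathbb{R}^3$ the scale-invariant weight $|x|^{-1/2}$ (that is $\sigma=0$) fails, because the low-frequency behavior of the half-wave resolvent produces a logarithmic divergence which no amount of pure power weighting can kill; the gain $(1+|\log|x||)^{\sigma/2}$ with $\sigma>1$ supplies just enough decay to make a Schur-type summation over dyadic low-frequency shells converge. Getting this sharp threshold $\sigma>1$ is the substantive content of the cited result; once the free smoothing with the precise weight $w_\sigma$ is in hand, the perturbative bootstrap above is essentially routine.
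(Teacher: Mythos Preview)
The paper does not prove this theorem: it is quoted verbatim from \cite{DanconaFanelli08-a} and used as a black box in the proof of Theorem~\ref{teo1}. There is therefore no ``paper's own proof'' to compare your attempt against.

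That said, your sketch is a faithful outline of the strategy actually used in \cite{DanconaFanelli08-a}: free Kato smoothing for $|D|$ via uniform weighted resolvent bounds (with the logarithmic weight $w_\sigma$, $\sigma>1$, needed precisely to sum the low-frequency dyadic pieces), transfer to $\mathcal{D}$ through \eqref{dirsol} and the $L^2$-boundedness of $|D|^{-1}\mathcal{D}$, and then a Duhamel bootstrap using smallness of $\delta$. One small correction: the Christ--Kiselev lemma does \emph{not} apply here, since both sides are $L^2_t$ and the lemma fails at the diagonal endpoint. The retarded inhomogeneous smoothing bound you need is instead a direct consequence of Kato's theory of $H$-smooth operators (if $A$ is $H$-smooth then $\|A\int_0^t e^{i(t-s)H}A^*F\,ds\|_{L^2_tL^2_x}\lesssim\|F\|_{L^2_tL^2_x}$ holds with the truncation), so the conclusion stands but the justification should be Kato rather than Christ--Kiselev.
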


It is not difficult to deduce the endpoint estimate (\ref{endV}) 
for the perturbed flow from the previous result
and our mixed endpoint-smoothing estimate \eqref{stdir}.
First of all, the solution of the equation
\begin{equation*}
  iu_{t}=\mathcal{D}u+Vu,\qquad u(0,x)=f
\end{equation*}
can be written, regarding $Vu$ as a right-hand member of the
equation
\begin{equation*}
  u=e^{it(\mathcal{D}+V)}f=
  e^{it \mathcal{D}}f+
  i\int_{0}^{t}e^{i(t-s)\mathcal{D}}(Vu)ds.
\end{equation*}
Then we can write
\begin{equation}\label{ap}
\||D|^{-1}u \|_{L^2_tL^\infty_x}=
\||D|^{-1}e^{it(\mathcal{D}+V)}f \|_{L^2_tL^\infty_x}\leq
\end{equation}
$$
\leq\left\| |D|^{-1}e^{it\mathcal{D}}f\right\|_{L^2_tL^\infty_x}+
\left\|\int_0^t 
\frac{e^{i(t-s)\mathcal{D}}}{|D|}
(V(s)e^{is(\mathcal{D}+V)}f)ds\right\|_{L^2_tL^\infty_x}.$$
The first term can be estimated by \eqref{homdir} (notice that
$|D|$ commutes with $\mathcal{D}$ and hence with the flow).
In order to apply estimate (\ref{stdir}) to the
second term we need the following

\begin{lemma}\label{stabflow}
If $f\in\mathcal{\dot{H}}^1$, then $e^{it\mathcal{D}}f\in \mathcal{\dot{H}}^1$, and if $V$ is of the form (\ref{pppot}), then $Ve^{it\mathcal{D}}f$ is of the form \eqref{Fform}.
\end{lemma}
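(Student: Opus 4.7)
My plan is to prove the two assertions in turn, in each case relying on the representation \eqref{dirsol} together with the algebraic identities $\mathcal{D}^{2}=|D|^{2}\mathbb{I}_{4}$, $(\alpha\cdot\hat x)^{2}=\mathbb{I}_{4}$ and $\beta(\alpha\cdot\hat x)=-(\alpha\cdot\hat x)\beta$ coming from the anticommutation relations \eqref{comm1}--\eqref{comm2}.

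For the first statement I write $f=f_{1}+\mathcal{D}f_{2}$ with $f_{1},f_{2}$ radial and expand each piece via \eqref{dirsol}. The multipliers $\cos(t|D|)$ and $|D|^{-1}\sin(t|D|)$ are radial (so they preserve radiality and the scale of $\dot H^{s}$ norms) and commute with $\mathcal{D}$, hence the $f_{1}$-piece rearranges as
\[
e^{it\mathcal{D}}f_{1}=\cos(t|D|)f_{1}+\mathcal{D}\Bigl(i\tfrac{\sin(t|D|)}{|D|}f_{1}\Bigr),
\]
a sum of a radial $\dot H^{1}$ function and $\mathcal{D}$ applied to a radial $\dot H^{2}$ function. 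For the $\mathcal{D}f_{2}$-piece I use $e^{it\mathcal{D}}\mathcal{D}f_{2}=\mathcal{D}e^{it\mathcal{D}}f_{2}$ and apply \eqref{dirsol} to $f_{2}$; the resulting $\mathcal{D}^{2}$ collapses to $|D|^{2}$ and yields
\[
e^{it\mathcal{D}}\mathcal{D}f_{2}=i|D|\sin(t|D|)f_{2}+\mathcal{D}\cos(t|D|)f_{2},
\]
whose two summands are a radial function in $\dot H^{1}$ (via $f_{2}\in\dot H^{2}$) and $\mathcal{D}$ of a radial function in $\dot H^{2}$. Summing the two decompositions places $e^{it\mathcal{D}}f$ in $\mathcal{\dot H}^{1}$.

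For the second statement I invoke part one to write $e^{it\mathcal{D}}f=g_{1}+\mathcal{D}g_{2}$ with $g_{1},g_{2}$ radial, and observe that for a radial $g_{2}$ one has $\mathcal{D}g_{2}=-i(\alpha\cdot\hat x)g_{2}'(|x|)$. Multiplying by $V=V_{1}(|x|)\mathbb{I}_{4}+i\beta(\alpha\cdot\hat x)V_{2}(|x|)$ produces four terms. The two terms $V_{1}(|x|)g_{1}$ and $i\beta(\alpha\cdot\hat x)V_{2}(|x|)g_{1}$ already sit in the two slots of \eqref{Fform}. The remaining two are handled by the anticommutation algebra: using $(\alpha\cdot\hat x)^{2}=\mathbb{I}_{4}$,
\[
i\beta(\alpha\cdot\hat x)V_{2}(|x|)\cdot\bigl(-i(\alpha\cdot\hat x)g_{2}'(|x|)\bigr)=\beta V_{2}(|x|)g_{2}'(|x|),
\]
which is radial (because $\beta$ is a constant matrix), while using $\beta(\alpha\cdot\hat x)=-(\alpha\cdot\hat x)\beta$,
\[
V_{1}(|x|)\cdot\bigl(-i(\alpha\cdot\hat x)g_{2}'(|x|)\bigr)=i\beta(\alpha\cdot\hat x)\bigl(\beta V_{1}(|x|)g_{2}'(|x|)\bigr),
\]
which lands in the $i\beta(\alpha\cdot\hat x)\cdot(\mathrm{radial})$ slot. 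Collecting the four pieces writes $Ve^{it\mathcal{D}}f$ in the form \eqref{Fform}.

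The only real obstacle is the matrix bookkeeping in the second step: a sign or a factor of $\beta$ is easy to misplace when rearranging the mixed angular terms. Everything else is routine Fourier-multiplier algebra on radial functions.
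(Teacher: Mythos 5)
Your proof is correct and takes essentially the same route as the paper: for the first claim both you and the author expand via \eqref{dirsol}, use $\mathcal{D}^{2}=|D|^{2}$ and commutativity of $\mathcal{D}$ with the radial Fourier multipliers $\cos(t|D|)$ and $|D|^{-1}\sin(t|D|)$ to land back in $\mathcal{\dot H}^{1}$. For the second claim the paper simply declares it trivial; your explicit bookkeeping with $\mathcal{D}g_{2}=-i(\alpha\cdot\hat x)g_{2}'$, $(\alpha\cdot\hat x)^{2}=\mathbb{I}_{4}$ and $\beta(\alpha\cdot\hat x)=-(\alpha\cdot\hat x)\beta$ is precisely the omitted computation, carried out correctly.
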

\begin{proof}
We write $f=f_1+\mathcal{D}f_2$ with $f_1$, $f_2$ radial functions. Then we have, from (\ref{dirsol}),
$$
e^{it\mathcal{D}}f=\left(\cos(t|D|)+\frac{\sin(t|D|)}{|D|}\mathcal{D}\right)(f_1+\mathcal{D}f_2)=
$$
$$
=\cos(t|D|)f_1+\sin(t|D|)|D|f_2+\mathcal{D}\left(\cos(t|D|)f_2+\frac{\sin(t|D|)}{|D|}f_1\right)=
$$
$$
=\tilde{f}_1+\mathcal{D}\tilde{f}_2,
$$
where $\tilde{f}_1$ and $\tilde{f}_2$ are radial functions with the
appropirate regularity, 
and this concludes the proof of the first statement.
The proof of the second statement is trivial.
\end{proof}

We thus  can estimate  (\ref{ap}) with (\ref{homdir}) and (\ref{stdir}) obtaining
$$
\lesssim\|f\|_{L^2}+\|\langle x\rangle^{\frac{1}{2}+}Vu\|_{L^2_tL^2_x}
$$
Now multiplying and dividing by $w_\sigma(x)^{1/2}$ in the second norm on the right hand side yields
$$
\leq\| f\|_{L^2}+\|\langle x\rangle^{1/2+}w_\sigma^{1/2}V\|_{L^\infty}\cdot\| w_\sigma^{-1/2}u\|_{L^2_tL^2_x}.
$$
Notice that the weighted norm of $V$ at the right hand side
is bounded as it follows from assumption \eqref{Vhp}. Moreover
\eqref{Vhp} implies also that the assumption of Theorem
\ref{danfan} is satisfied.
Then using \eqref{smooth} we conclude
\begin{equation}
\| |D|^{-1}u\|_{L^2_tL^\infty_x}\leq\left(1+\|\langle x\rangle^{1/2+}w_\sigma^{1/2}V\|_{L^\infty}\right)\| f\|_{L^2}
\end{equation}
that gives, under hypothesis (\ref{Vhp}) on the potential, the desired Strichartz endpoint estimate.

\section{Partial wave subspaces and radial Dirac operator}\label{thaller} 

The purpose of this section is to construct, following \cite{Thaller92-a}, invariant subspaces for the Dirac operator with a potential having
a special symmetry. To this end we use the classical
decomposition of the space $L^2(\mathbb{R}^3)^4$ in the direct sum of 2-dimensional Hilbert spaces, the \emph{partial wave subspaces},
which are invariant for the Dirac operator. 
We shall aslo check that the lowest order partial wave subspaces are
invariant even for the cubic nonlinearities that we consider
here. 

We begin by recalling the basic facts on the decomposition,
referring to \cite{Thaller92-a} for more details. We shall use
the standard notation for polar coordinates in $\mathbb{R}^{3}$
\begin{equation*}
  x=r\sin\theta\cos\phi,\qquad
  y=r\sin\theta\sin\phi,\qquad
  z=r\cos\theta
\end{equation*}
with the unit vectors in the directions of the polar coordinate lines given by
$$
\displaystyle
\begin{cases}
e_r=(\sin\theta\cos\phi,\sin\theta\sin\phi,\cos\theta)=\displaystyle\frac{x}{|x|}=\hat{x}\\
e_\theta=(\cos\theta\cos\phi,\cos\theta\sin\phi,-\sin\theta)=
\displaystyle\frac{\partial e_r}{\partial\theta}\\
e_\phi=(-\sin\phi,\cos\phi,0)=\displaystyle\frac{1}{\sin\theta}
\frac{\partial e_r}{\partial\phi}.
\end{cases}
$$
Then we write for a function $\psi\in L^2(\mathbb{R}^3)$
\begin{equation}\label{rad}
\psi(r,\theta,\phi)=r\tilde{\psi}(x(r,\theta,\phi),y(r,\theta,\phi),z(r,\theta,\phi)).
\end{equation}
Since the function $\psi(r,\cdot,\cdot)$ of the angular variables is square integrable on the unit sphere $L^2(S^2)$, the mapping $\tilde\psi\rightarrow\psi$ defines a unitary isomorphism
$$
L^2(\mathbb{R}^3)\cong L^2((0,\infty), dr;L^2(S^2))=L^2((0,\infty),dr)\otimes L^2(S^2).
$$
Applying the transformation (\ref{rad}) on each component of the 
(vector valued)
wavefunction, we obtain the analogous decomposition
$$
L^2(\mathbb{R}^3)^4\cong L^2((0,\infty),dr)\otimes L^2(S^2)^4.
$$
The decomposition of the Hilbert space into a "radial" and an "angular" part is very useful since the angular momentum operators
$$
\textbf{L}=x\wedge (-i\nabla)\qquad \textrm{orbital angular momentum}
$$
$$
\textbf{J}=\textbf{L}+\textbf{S}\qquad\textrm{total angular momentum}
$$
act only on the angular part $L^2(S^2)^4$ in a nontrivial way;
here
\begin{equation*}
  \textbf{S}=-1/4(\alpha\wedge\alpha)
\end{equation*}
is the spin angular momentum operator.
Recalling the expression of $\nabla$ in polar coordinates
\begin{equation}\label{nabla}
\nabla=\displaystyle e_r\frac{\partial}{\partial r}+
\frac{1}{r}\left(e_\theta\frac{\partial}{\partial\theta}+e_\phi\frac{1}{\sin\theta}\frac
{\partial}{\partial\theta}\right)
\end{equation}
we obtain that, since $x=r\cdot e_r$,
\begin{equation}\label{L}
\textbf{L}=\displaystyle  i\:e_\theta\frac{1}{\sin\theta}\frac{\partial}{\partial\phi}-i\:
e_\phi\frac{\partial}{\partial\theta}
\end{equation}
where the differentiation applies to each component of the wavefunction.

The Dirac operator can be written in polar coordinates
as follows. Combining (\ref{nabla}) and (\ref{L}) yields
$$
\displaystyle -i\nabla=-i\:e_r\frac
{\partial}{\partial r}-\frac{1}{r}(e_r\wedge\textbf{L})
$$
and thus
\begin{equation}\label{dirrad1}
\displaystyle -i\alpha\cdot\nabla=-i(\alpha\cdot e_r)\frac{\partial}{\partial r}-
\frac{1}{r}\alpha\cdot(e_r\wedge \textbf{L}).
\end{equation}
By the basic property of the Dirac matrices:
$$
(\alpha\cdot A)(\alpha\cdot B)=A\cdot B+2i\textbf{S}\cdot(A\wedge B);
$$
which holds for any matrix-valued vector fields $A=(A^1,A^2,A^3)$, $B=(B^1,B^2,B^3)$ with $F^i,G^i\in \mathcal{M}_{4\times4}(\mathbb{C})$, and
$$
\gamma_5\alpha=2\textbf{S},
$$
where $\gamma_5=\left(\begin{array}{cc}0 & 1 \\1 & 0\end{array}\right),$
we obtain
$$
(\alpha\cdot A)(2\textbf{S}\cdot B)=i\gamma_5 A\cdot B-i\alpha\cdot(A\wedge B),
$$
thus equation (\ref{dirrad1}) is equal to
$$
=-i(\alpha\cdot e_r)\frac{\partial}{\partial r}+\frac{i}{r}
(\alpha\cdot e_r)(2\textbf{S}\cdot\textbf{L}).
$$
Finally, introducing the \emph{spin orbit operator}
\begin{equation}\label{spinorb}
  K=\beta(2\textbf{S}\cdot\textbf{L}+1)\equiv \beta(J^2-L^2+1/4)
\end{equation}
where we used the identity $J^2=(L+S)^2=L^2+2S\cdot L+3/4$,
we arrive at the following representation:

\begin{proposition}\label{diracrep}
The 3-dimensional Dirac operator can be written as
\begin{equation}\label{dirfor}
\mathcal{D}=-i(\alpha\cdot\hat{x})
\left(\frac{\partial}{\partial r}+
\frac{1}{r}-\frac{1}{r}\beta K\right)
\end{equation}
where $K$ is the spin orbit operator defined in (\ref{spinorb}).
\end{proposition}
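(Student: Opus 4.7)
The plan is to finish the chain of computations that the section has already set up; the proposition amounts to repackaging the angular part of the Dirac operator in terms of the spin--orbit operator $K$.

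First, I would take as the starting point the expression that has just been derived in the text,
$$
\mathcal{D} = -i(\alpha\cdot e_r)\frac{\partial}{\partial r} + \frac{i}{r}(\alpha\cdot e_r)(2\mathbf{S}\cdot\mathbf{L}),
$$
which came from writing $\nabla$ in polar coordinates, applying the Dirac matrix product identity $(\alpha\cdot A)(2\mathbf{S}\cdot B)=i\gamma_{5}A\cdot B - i\alpha\cdot(A\wedge B)$ with $A=e_r$, $B=\mathbf{L}$, and using $e_r\cdot\mathbf{L}=0$ (a consequence of $\mathbf{L}=-i(x\wedge\nabla)$ together with $x\parallel e_r$, which kills the triple product). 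Factoring the common left factor $-i(\alpha\cdot\hat x)$ out, this can be rewritten as
$$
\mathcal{D} = -i(\alpha\cdot\hat x)\left[\frac{\partial}{\partial r} - \frac{1}{r}\,(2\mathbf{S}\cdot\mathbf{L})\right].
$$

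Second, I would convert $2\mathbf{S}\cdot\mathbf{L}$ into $K$ via the defining relation \eqref{spinorb}, $K=\beta(2\mathbf{S}\cdot\mathbf{L}+1)$. Multiplying on the left by $\beta$ and using $\beta^{2}=\mathbb{I}_{4}$ from \eqref{comm2} gives $\beta K = 2\mathbf{S}\cdot\mathbf{L}+1$, hence
$$
-\tfrac{1}{r}(2\mathbf{S}\cdot\mathbf{L}) = \tfrac{1}{r} - \tfrac{1}{r}\beta K.
$$
Substituting into the previous display yields exactly formula \eqref{dirfor}.

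Because all of the genuine analytic content (the polar decomposition of $\nabla$, the Dirac matrix identity, and the vanishing of $e_r\cdot\mathbf{L}$) has already been carried out in the text, the only new ingredient is the elementary matrix identity $\beta K=2\mathbf{S}\cdot\mathbf{L}+1$, and I do not expect any real obstacle. The one detail worth checking is that the operator ordering causes no trouble: $\beta$ and $\alpha\cdot\hat x$ anticommute by \eqref{comm2}, but in the expression $-i(\alpha\cdot\hat x)\cdot\frac{1}{r}\beta K$ the factor $\beta K$ is always applied \emph{after} $\alpha\cdot\hat x$, and the two $\beta$'s hiding inside $\beta K=\beta\cdot\beta(2\mathbf{S}\cdot\mathbf{L}+1)$ are adjacent, so the use of $\beta^{2}=\mathbb{I}_{4}$ is unambiguous and no commutator corrections arise.
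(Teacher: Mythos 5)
Your proposal is correct and follows exactly the route the paper itself takes: the derivation in the text immediately preceding the proposition (polar form of $\nabla$, the identity $(\alpha\cdot A)(2\mathbf{S}\cdot B)=i\gamma_5 A\cdot B-i\alpha\cdot(A\wedge B)$ with the $e_r\cdot\mathbf{L}=0$ cancellation, then the substitution $2\mathbf{S}\cdot\mathbf{L}=\beta K-1$ using $\beta^2=\mathbb{I}_4$). Your explicit verification that $e_r\cdot\mathbf{L}=0$ is a detail the paper leaves implicit, but the argument is otherwise the same.
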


The key step to construct the invariant spaces is the following:

\begin{proposition}\label{eigen}
  For each choice $(j,m_{j},k_{j})$ with
  $j=\frac{1}{2},\frac{3}{2},\frac{5}{2},..$, 
  $m_j=-j,-j+1,...,+j$, $k_j=-(j+1/2), +(j+1/2)$, there exist precisely
  two eigenfunctions $\Phi^{\pm}_{m_j,k_j}\in C^\infty(S^2)^4$
  satisfying the following relations:
  $$
  J^2\Phi_{m_j,k_j}=j(j+1)\Phi_{m_j,k_j},
  $$
  $$
  J_3\Phi_{m_j,k_j}=m_j\Phi_{m_j,k_j},
  $$
  $$
  K\Phi_{m_j,k_j}=-k_j\Phi_{m_j,k_j}.
  $$
  The family $\Phi^{\pm}_{m_j,k_j}$ forms an orthonomral basis of 
  $L^{2}(\mathbb{S}^{2})^{4}$.
\end{proposition}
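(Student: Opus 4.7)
The plan is to obtain the $\Phi^{\pm}_{m_{j},k_{j}}$ by the classical Clebsch--Gordan construction of spin-angular functions, and then verify the three eigenvalue equations directly. The main work is bookkeeping of two-component spinors coupled to scalar spherical harmonics; the completeness then reduces to completeness of the ordinary spherical harmonics.

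\textbf{Step 1: Scalar spherical harmonics.} I would begin by recalling the standard orthonormal basis $\{Y_{\ell}^{m}\}$ of $L^{2}(S^{2})$, with $\ell=0,1,2,\dots$ and $|m|\le\ell$, satisfying $L^{2}Y_{\ell}^{m}=\ell(\ell+1)Y_{\ell}^{m}$ and $L_{3}Y_{\ell}^{m}=mY_{\ell}^{m}$. This gives the decomposition $L^{2}(S^{2})=\bigoplus_{\ell\ge 0}\mathcal{Y}_{\ell}$.

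\textbf{Step 2: Two-component spin-angular functions.} Next, for each $(j,m_{j})$ with $j\in\{1/2,3/2,\dots\}$ and $|m_{j}|\le j$, and for each admissible $\ell\in\{j-1/2,j+1/2\}$, I would define the Pauli spin-angular functions $\Omega_{j,m_{j},\ell}\in L^{2}(S^{2})^{2}$ by the Clebsch--Gordan combination of $Y_{\ell}^{m_{j}\mp 1/2}$ with the constant spinors $\chi_{\pm}$. A direct computation (using $J=L+S_{2}$ with $S_{2}=\tfrac12\sigma$ in two components) shows
\begin{equation*}
  J^{2}\Omega_{j,m_{j},\ell}=j(j+1)\,\Omega_{j,m_{j},\ell},\qquad
  J_{3}\Omega_{j,m_{j},\ell}=m_{j}\,\Omega_{j,m_{j},\ell},\qquad
  L^{2}\Omega_{j,m_{j},\ell}=\ell(\ell+1)\,\Omega_{j,m_{j},\ell}.
\end{equation*}
Since each spherical harmonic appears in exactly the two-component construction above, the family $\{\Omega_{j,m_{j},\ell}\}$ is an orthonormal basis of $L^{2}(S^{2})^{2}$.

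\textbf{Step 3: Lifting to four-component spinors and computing $K$.} Writing $\Psi=(\varphi,\chi)^{\mathsf{T}}$ for a 4-spinor with upper component $\varphi$ and lower component $\chi$, the operators $J^{2},J_{3},L^{2}$ act blockwise. The operator $K$ contains a $\beta$, which contributes a sign difference between upper and lower components. Using $2S\cdot L=J^{2}-L^{2}-S^{2}=J^{2}-L^{2}-3/4$, one obtains
\begin{equation*}
  (2S\cdot L+1)\Omega_{j,m_{j},\ell}=
  \bigl(j(j+1)-\ell(\ell+1)+1/4\bigr)\Omega_{j,m_{j},\ell},
\end{equation*}
which equals $+(j+1/2)$ when $\ell=j-1/2$ and $-(j+1/2)$ when $\ell=j+1/2$. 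Combining the $\beta$-sign with the above, a 4-spinor whose upper component is proportional to $\Omega_{j,m_{j},j-1/2}$ and lower component to $\Omega_{j,m_{j},j+1/2}$ is a joint eigenfunction of $K$ with eigenvalue $+(j+1/2)$, while swapping the two $\ell$-values yields eigenvalue $-(j+1/2)$. I would then define the two functions $\Phi^{\pm}_{m_{j},k_{j}}$ for each $(j,m_{j},k_{j})$ as (respectively) the 4-spinor with only the upper component nonzero and the 4-spinor with only the lower component nonzero, each with the unique choice of $\ell$ forced by the eigenvalue $-k_{j}$ of $K$; both are simultaneously eigenfunctions of $J^{2}$ with eigenvalue $j(j+1)$ and of $J_{3}$ with eigenvalue $m_{j}$.

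\textbf{Step 4: Completeness.} Finally, since every element of $L^{2}(S^{2})^{4}$ splits into its upper and lower 2-spinor components, each of which expands uniquely in the $\Omega_{j,m_{j},\ell}$ basis of $L^{2}(S^{2})^{2}$ from Step~2, the collection $\{\Phi^{\pm}_{m_{j},k_{j}}\}$ is a complete orthonormal family in $L^{2}(S^{2})^{4}$. The only delicate point is to keep the sign conventions for $k_{j}$ consistent with \eqref{spinorb}; this is pure bookkeeping once the identity $K=\beta(J^{2}-L^{2}+1/4)$ is used. No genuine obstacle arises since the decomposition is purely algebraic and the needed computations reduce to the standard addition of angular momenta.
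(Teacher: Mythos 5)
Your proposal is correct and is essentially the paper's own approach: the paper likewise builds the $\Phi^{\pm}_{m_j,k_j}$ from the Clebsch--Gordan two-component spin-angular functions (its $\Psi^{m_j}_{j\mp 1/2}$ in \eqref{psi-}--\eqref{psi+}), places one in the upper and one in the lower block of a 4-spinor with the $\ell$-value shifted across $\beta$, and reads off the $K$-eigenvalue from \eqref{spinorb}. The eigenvalue arithmetic $j(j+1)-\ell(\ell+1)+1/4=\pm(j+1/2)$ for $\ell=j\mp1/2$ and the completeness argument via the spherical-harmonic basis both match the paper's construction.
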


The functions $\Phi_{m_j,k_j}$ can be written
explicitly using spherical harmonics.
We first recall the following representation of 3-dimensional spherical harmonics
\begin{equation}\label{harm}
Y^m_l(\theta,\phi)=\displaystyle\sqrt{\frac{2l+1}{4\pi}\frac{(l-m)!}{(l+m)!}}e^{im\phi}
P^m_l(\cos\theta)\qquad\forall-l\leq m\leq l.
\end{equation}
where $P^m_l$ are the Legendre polynomials
\begin{equation}\label{leg}
P^m_l(x)=
\displaystyle\frac{(-1)^m}{2^ll!}(1-x^2)^{m/2}\frac{d^{m+l}}{dx^{m+l}}(x^2-1)^l.
\end{equation}
As it is well known, the spherical harmonics form a complete orthonormal set in $L^2(S^2)$, i.e. every function $f\in L^2(S^2)$ can be written as
$$
f(\theta,\phi)=\displaystyle\sum_{l=0}^\infty
\sum_{m=-l}^lf_l^mY^m_l(\theta,\phi)
$$
for some constants $f^m_l$; moreover, they are eigenfunctions of both the operators $L^2$ and $L_3$, i.e.
\begin{equation}\label{L2}
L^2Y^m_l=l(l+1)Y^m_l
\end{equation}
\begin{equation}\label{L3}
L_3Y^m_l=mY^m_l.
\end{equation}
We now define for $j=\frac{1}{2},\frac{3}{2},\frac{5}{2},..$, $m_j=-j,-j+1,...,+j$ the functions $\Psi^{m_j}_{j\mp 1/2}\in L^2(S^2)^2$:
\begin{equation}\label{psi-}
\Psi^{m_j}_{j-1/2}=\displaystyle\frac{1}{\sqrt{2j}}
\left(\begin{array}{cc}\sqrt{j+m_j}\:Y^{m_j-1/2}_{j-1/2}\\
\sqrt{j-m_j}\:Y^{m_j+1/2}_{j-1/2}
\end{array}\right)
\end{equation}
\begin{equation}\label{psi+}
\Psi^{m_j}_{j+1/2}=\displaystyle\frac{1}{\sqrt{2j+2}}
\left(\begin{array}{cc}\sqrt{j+1-m_j}\:Y^{m_j-1/2}_{j+1/2}\\
-\sqrt{j+1+m_j}\:Y^{m_j+1/2}_{j+1/2}
\end{array}\right).
\end{equation}
These functions are, as it is easily seen, eigenfunctions of both  the operators $L^2$ and $J^2=L^2+\sigma\cdot L+3/4$  with eigenvalues 
$l(l+1)$ and $j(j+1)$ respectively.
So we conclude, in view of (\ref{spinorb}), that the functions 
in Proposition (\ref{eigen}) are given by
\begin{equation}\label{phi}
\Phi^+_{m_j,\mp(j+1/2)}=\displaystyle
\left(\begin{array}{cc}i\Psi^{m_j}_{j\mp1/2}\\
0
\end{array}\right)\qquad\Phi^-_{m_j,\mp(j+1/2)}=\displaystyle
\left(\begin{array}{cc}0\\
\Psi^{m_j}_{j\pm1/2}.
\end{array}\right).
\end{equation}
Thus the Hilbert space $L^2(S^2)^4$ is the orthogonal direct sum of 2-dimensional Hilbert spaces $\mathcal{H}_{m_j,k_j}$, which are spanned by simultaneous eigenfunctions $\Phi^\pm_{m_j,k_j}$ of $J^2$ and $K$:
\begin{equation}\label{dirsum}
L^2(S^2)^4=\displaystyle\bigoplus_{j=\frac{1}{2},\frac{3}{2},...}^\infty
\bigoplus_{m_j=-j}^j\bigoplus_{k_j=\pm(j+\frac{1}{2})}\mathcal{H}_{m_j,k_j}
\end{equation}
Easy calculations show that the functions $\Psi^{m_j}_{j\pm1/2}$ satisfy
$$
\displaystyle(\sigma\cdot\hat{x})\Psi^{m_j}_{j\pm1/2}=\Psi^{m_j}_{j\mp1/2},
$$
and hence
\begin{equation}\label{alphasig}
i(\alpha\cdot\hat{x})\Phi^\pm_{m_j,k_j}=\mp\Phi^\mp_{m_j,k_j}.
\end{equation}
This proves the following:

\begin{lemma}\label{stab}
The subspaces $\mathcal{H}_{m_j,k_j}$ are left invariant by the operators $\beta$ and\\ $\alpha\cdot\hat{x}$. With respect to the basis $\{\Phi_{m_j,k_j}^+\Phi_{m_j,k_j}^-\}$ defined above, these operators are represented by the $2\times 2$ matrices
\begin{equation}\label{betaalpha}
\beta=\left(\begin{array}{cc}1 & 0\\0 & -1\end{array}\right),\qquad
-i\alpha\cdot\hat{x}=\left(\begin{array}{cc}0 & -1\\1 & 0\end{array}\right).
\end{equation}
\end{lemma}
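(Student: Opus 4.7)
The plan is to deduce the statement directly from the explicit formulas already collected: equation \eqref{phi} for $\Phi^\pm_{m_j,k_j}$ and the key relation \eqref{alphasig}, namely $i(\alpha\cdot\hat{x})\Phi^\pm_{m_j,k_j}=\mp\Phi^\mp_{m_j,k_j}$. These two ingredients immediately give both the invariance and the matrix representation, so the proof is essentially a one-line verification for each of the two operators.

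First I would handle $\beta$. Looking at \eqref{phi}, the basis vector $\Phi^+_{m_j,k_j}$ is of the form $(i\Psi^{m_j}_{j\mp 1/2},0)^T$ and $\Phi^-_{m_j,k_j}$ is of the form $(0,\Psi^{m_j}_{j\pm 1/2})^T$, so the block structure $\beta=\mathrm{diag}(\mathbb{I}_2,-\mathbb{I}_2)$ yields
\begin{equation*}
  \beta\Phi^+_{m_j,k_j}=\Phi^+_{m_j,k_j},\qquad
  \beta\Phi^-_{m_j,k_j}=-\Phi^-_{m_j,k_j}.
\end{equation*}
This shows that $\mathcal{H}_{m_j,k_j}$ is $\beta$-invariant and that, in the basis $\{\Phi^+_{m_j,k_j},\Phi^-_{m_j,k_j}\}$, the operator $\beta$ is represented by $\mathrm{diag}(1,-1)$, as claimed.

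Next I would handle $\alpha\cdot\hat{x}$ by rewriting \eqref{alphasig} as
\begin{equation*}
  (-i\alpha\cdot\hat{x})\Phi^+_{m_j,k_j}=\Phi^-_{m_j,k_j},\qquad
  (-i\alpha\cdot\hat{x})\Phi^-_{m_j,k_j}=-\Phi^+_{m_j,k_j}.
\end{equation*}
Again $\mathcal{H}_{m_j,k_j}$ is preserved, and reading off the coordinates in the ordered basis $\{\Phi^+_{m_j,k_j},\Phi^-_{m_j,k_j}\}$ one finds that the first basis vector is sent to $(0,1)^T$ and the second to $(-1,0)^T$, giving precisely the second matrix in \eqref{betaalpha}. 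The only place where one might want to double-check is the sign/phase bookkeeping in \eqref{alphasig}, which in turn rests on the identity $(\sigma\cdot\hat{x})\Psi^{m_j}_{j\pm 1/2}=\Psi^{m_j}_{j\mp 1/2}$ together with the off-diagonal block form of $\alpha\cdot\hat{x}$ and the factor $i$ appearing in the upper component of $\Phi^+_{m_j,k_j}$ in \eqref{phi}; this is a straightforward computation with the Pauli matrices and is really the only point that requires care. Once the two bullets above are in place, there is nothing further to prove.
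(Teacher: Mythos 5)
Your argument is correct and coincides with the paper's, which simply reads off the matrix entries from the block form of $\Phi^\pm_{m_j,k_j}$ in \eqref{phi} and from the relation \eqref{alphasig}; the sign bookkeeping you flag does check out, since $i(\alpha\cdot\hat{x})\Phi^\pm=\mp\Phi^\mp$ gives $(-i\alpha\cdot\hat{x})\Phi^+=\Phi^-$ and $(-i\alpha\cdot\hat{x})\Phi^-=-\Phi^+$, whose coordinate columns are exactly $(0,1)^T$ and $(-1,0)^T$. Nothing to add.
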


The decomposition just shown obviously implies a similar one of $L^2(\mathbb{R}^3)^4$, in which each \emph{partial wave subspace} $L^2((0,\infty),dr)\otimes\mathcal{H}_{m_j,k_j}$ is isomorphic to $L^2((0,\infty),dr)^2$ if we choose the basis $\{\Phi^+_{m_j,k_j},\Phi^-_{m_j,k_j}\}$. There is in fact a unitary isomorphism between the Hilbert spaces:
\begin{equation}\label{isom}
L^2(\mathbb{R}^3)^4\cong \bigoplus L^2((0,\infty),dr)\otimes \mathcal{H}_{m_j,k_j}.
\end{equation}
This decomposition and (\ref{dirfor}) allow us to easily calculate the action of the Dirac operator (at least on differentiable states) even in the presence of a suitable potential.

\begin{proposition}\label{dir+v}
The Dirac operator (\ref{dirfor}) with the potential
\begin{equation}\label{V}
V(x)=V_1(|x|)\mathbb{I}_4+i\beta(\alpha\cdot\hat{x})V_2(|x|)
\end{equation}
leaves the partial wave subspaces $C^\infty_0(0,\infty)\otimes \mathcal{H}_{m_j,k_j}$ invariant. With respect to the basis ${\Phi_{m_j,k_j}^+,\Phi^-_{m_j,k_j}}$ the Dirac operator on each subspace can be represented by the operator
\begin{equation}\label{action}
\displaystyle
d_{m_j,k_j}=\left(\begin{array}{cc}V_1(|x|) & -\frac{d}{dr}+\frac{k_j}{r}
+V_2(|x|)\\  \frac{d}{dr}+\frac{k_j}{r}
+V_2(|x|) & V_1(|x|)\end{array}\right)
\end{equation}
which is well defined over $C^\infty_0(0,\infty)^2\subset L^2((0,\infty),dr)^2$.
Moreover, the Dirac operator $\mathcal{D}$ on $C^\infty_0(\mathbb{R}^3)^4$is unitary equivalent to the direct sum of the partial wave Dirac operators $d_{m_j,k_j}$,
\begin{equation}\label{pardir}
\mathcal{D}\cong \displaystyle\bigoplus_{j=\frac{1}{2},\frac{3}{2},...}^\infty
\bigoplus_{m_j=-j}^j\bigoplus_{k_j=\pm(j+\frac{1}{2})}d_{m_j,k_j}\end{equation}
\end{proposition}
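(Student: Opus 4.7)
The plan is to verify the two claims by an explicit calculation on a single partial wave subspace and then to assemble the direct sum decomposition using Proposition \ref{eigen} and Lemma \ref{stab}. First I would fix a pair of indices $(m_j,k_j)$ and, via the unitary isomorphism (\ref{rad})--(\ref{isom}) which introduces a factor of $r^{-1}$, write a generic element of $C^\infty_0(0,\infty)\otimes\mathcal{H}_{m_j,k_j}$ as
$$
u(x)=\tfrac{1}{r}\bigl(f(r)\Phi^+_{m_j,k_j}+g(r)\Phi^-_{m_j,k_j}\bigr),\qquad (f,g)\in C^\infty_0(0,\infty)^2.
$$
The goal is then to apply the radial representation (\ref{dirfor}) to $u$ and read off the matrix acting on the pair $(f,g)$.

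Because the angular functions $\Phi^\pm$ are independent of $r$, the first step is the identity $\partial_r u=-r^{-2}(f\Phi^++g\Phi^-)+r^{-1}(f'\Phi^++g'\Phi^-)$, so that adding the multiplication $r^{-1}u$ exactly cancels the $r^{-2}$ term and yields $(\partial_r+r^{-1})u=r^{-1}(f'\Phi^++g'\Phi^-)$. The spin--orbit piece is then handled through the eigenvalue relation $K\Phi^\pm=-k_j\Phi^\pm$ of Proposition \ref{eigen} combined with the diagonal action $\beta\Phi^\pm=\pm\Phi^\pm$ from Lemma \ref{stab}; this gives $-r^{-1}\beta K u=k_jr^{-2}(f\Phi^+-g\Phi^-)$. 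Adding the two contributions, the quantity inside the brackets of (\ref{dirfor}) equals $r^{-1}\bigl[(f'+k_jf/r)\Phi^++(g'-k_jg/r)\Phi^-\bigr]$. The final factor $-i(\alpha\cdot\hat x)$ acts on the basis $\{\Phi^+,\Phi^-\}$ as the antisymmetric swap of Lemma \ref{stab} (equivalently (\ref{alphasig})), namely $-i(\alpha\cdot\hat x)\Phi^\pm=\pm\Phi^\mp$. Applying it produces
$$
\mathcal{D}u=\tfrac{1}{r}\Bigl[\bigl(-g'+\tfrac{k_jg}{r}\bigr)\Phi^+ +\bigl(f'+\tfrac{k_jf}{r}\bigr)\Phi^-\Bigr],
$$
which both shows that $\mathcal{D}$ leaves the partial wave subspace invariant and furnishes the off-diagonal Dirac block of $d_{m_j,k_j}$.

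For the potential, the term $V_1(|x|)\mathbb{I}_4$ is manifestly scalar and contributes a diagonal $V_1$ to both radial components. For the second term, the previous computations combine to give $i\beta(\alpha\cdot\hat x)\Phi^\pm=\Phi^\mp$ (a symmetric, sign-free swap), so that $i\beta(\alpha\cdot\hat x)V_2$ adds $V_2$ on both off-diagonal entries. Summing the two contributions yields exactly the operator (\ref{action}). The global identification (\ref{pardir}) is then immediate: the decomposition (\ref{dirsum}) of $L^2(S^2)^4$, tensored with $L^2((0,\infty),dr)$, decomposes $L^2(\mathbb{R}^3)^4$ into pairwise orthogonal partial wave subspaces, each of which has just been shown to be invariant and to carry $\mathcal{D}+V$ as the corresponding $d_{m_j,k_j}$.

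The only real obstacle is a notational one: one must keep careful track of the factor $r^{-1}$ coming from the unitary identification (\ref{rad}). It is precisely this factor that produces the cancellation of the $r^{-2}$ term generated by $\partial_r$, and omitting it would leave spurious $1/r$ contributions incompatible with the clean form (\ref{action}). Beyond this bookkeeping, every step is a straightforward application of the eigenrelations for $K$, $\beta$ and $\alpha\cdot\hat{x}$ on the basis $\{\Phi^+_{m_j,k_j},\Phi^-_{m_j,k_j}\}$ that have already been collected in Proposition \ref{eigen} and Lemma \ref{stab}.
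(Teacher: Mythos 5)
Your computation is correct and is exactly the standard argument: the paper itself states this proposition without proof (deferring to Thaller's book), and your calculation fills it in using precisely the ingredients the paper has assembled --- the polar form \eqref{dirfor}, the eigenrelation $K\Phi^{\pm}=-k_j\Phi^{\pm}$ from Proposition \ref{eigen}, the actions of $\beta$ and $-i\alpha\cdot\hat{x}$ from Lemma \ref{stab}, and the $r^{-1}$ factor built into the unitary identification \eqref{rad}. In particular you correctly track the cancellation of the $r^{-2}$ term against the $+1/r$ in the bracket of \eqref{dirfor}, and the signs in $-i(\alpha\cdot\hat{x})\Phi^{\pm}=\pm\Phi^{\mp}$ versus the sign-free swap $i\beta(\alpha\cdot\hat{x})\Phi^{\pm}=\Phi^{\mp}$, which is where such a computation most easily goes wrong.
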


\begin{remark}\label{rem}
Proposition \ref{dir+v} holds for slightly more general potentials (see \cite{Thaller92-a}), but we shall not need this fact here.
\end{remark}

\begin{remark}\label{rkrad}
The operator in (\ref{action}) is also known as \emph{radial Dirac operator}. It can be proved that $d_{m_j,k_j}$ is essentially self-adjoint (for every $j$) on $C^\infty_0(0,\infty)$ if and only if $\mathcal{D}+V$ is essentially self-adjoint on $C^\infty_0(\mathbb{R}^3\backslash \{0\})$.
\end{remark}

Thus using spherical coordinates it is possible to construct invariant spaces for the perturbed Dirac operator. What may come as a surprise is that for $j=1/2$ the partial wave subspaces are also invariant for the cubic nonlinearity, and this fact is obviously crucial for the nonlinear application we shall prove in the next section.
\begin{lemma}\label{1/2}
Let $j=1/2$ and let $(m_{1/2}, k_{1/2})$ be one of the couples
(-1/2,-1), (-1/2,1), (1/2,-1), (1/2,1). Then the partial wave subspaces $C^\infty_0((0,\infty),dr)\otimes  \mathcal{H}_{m_{1/2},k_{1/2}}$ are invariant for the cubic nonlinearities $F_1(u)=\langle u,u\rangle u$ and $F_2(u)=\langle \beta u,u\rangle u$, i.e. \begin{equation}\label{stab}
u\in C^\infty_0((0,\infty),dr)\otimes  \mathcal{H}_{m_{1/2},k_{1/2}}\Rightarrow F_1(u),\:F_2(u)\in C^\infty_0((0,\infty),dr)\otimes  \mathcal{H}_{m_{1/2},k_{1/2}}.
\end{equation}
\end{lemma}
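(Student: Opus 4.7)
The plan is to parametrize a generic element of $C^\infty_0((0,\infty),dr)\otimes \mathcal{H}_{m_{1/2},k_{1/2}}$ as
\[
u(r,\theta,\phi) \;=\; f(r)\,\Phi^{+}_{m_{1/2},k_{1/2}}(\theta,\phi) + g(r)\,\Phi^{-}_{m_{1/2},k_{1/2}}(\theta,\phi), \qquad f,g\in C^\infty_0(0,\infty),
\]
and show that both scalar densities $\langle u,u\rangle$ and $\langle\beta u,u\rangle$ depend on $|x|$ alone. Once this is done, $F_1(u)=\langle u,u\rangle u$ and $F_2(u)=\langle\beta u,u\rangle u$ are automatically the product of a smooth compactly supported radial function and $u$, so they still lie in the same partial wave subspace.

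The first step is essentially free: from (\ref{phi}) one sees that $\Phi^{+}_{m,k}$ is supported only in the upper two spinor components while $\Phi^{-}_{m,k}$ is supported only in the lower two. Hence the pointwise hermitian products $\langle\Phi^{+},\Phi^{-}\rangle$ and $\langle\Phi^{-},\Phi^{+}\rangle$ vanish identically on $\mathbb{S}^2$, and the same identities persist after inserting $\beta$ because $\beta$ is block-diagonal with respect to this splitting. Expanding then gives
\[
\langle u,u\rangle = |f|^2|\Phi^{+}|^2 + |g|^2|\Phi^{-}|^2, \qquad \langle\beta u,u\rangle = |f|^2|\Phi^{+}|^2 - |g|^2|\Phi^{-}|^2,
\]
so the whole problem reduces to verifying that $|\Phi^{\pm}_{m,k}|^2$ is a constant on $\mathbb{S}^2$ for $j=1/2$, equivalently that $|\Psi^{m}_0|^2$ and $|\Psi^{m}_1|^2$ are constant for $m=\pm 1/2$.

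The second and decisive step is this constancy check. For $l=j-1/2=0$ it is immediate since $Y^0_0=1/\sqrt{4\pi}$ is itself constant, which yields $|\Psi^{\pm 1/2}_0|^2 = 1/(4\pi)$. For $l=j+1/2=1$ I would substitute the explicit $l=1$ spherical harmonics from (\ref{harm}) into (\ref{psi+}) and compute, for instance,
\[
|\Psi^{1/2}_1|^2 \;=\; \tfrac{1}{3}\bigl(|Y^0_1|^2 + 2|Y^1_1|^2\bigr) \;=\; \tfrac{1}{4\pi}\bigl(\cos^2\theta + \sin^2\theta\bigr) \;=\; \tfrac{1}{4\pi},
\]
and symmetrically for $m=-1/2$. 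The cancellation between $\cos^2\theta$ and $\sin^2\theta$ is exactly what the Clebsch--Gordan-type coefficients appearing in (\ref{psi+}) produce when $j=1/2$; it would already fail for $j\geq 3/2$, which is precisely what restricts the lemma to the lowest partial wave.

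I expect the only real obstacle to be this pointwise constancy of $|\Psi^{m}_1|^2$ on $\mathbb{S}^2$; once it is in hand, $\langle u,u\rangle$ and $\langle\beta u,u\rangle$ are radial, both nonlinearities take the form $\rho(r)\,u$ with $\rho\in C^\infty_0(0,\infty)$, and the invariance of the partial wave subspace is immediate. Everything else in the argument is bookkeeping in the decomposition (\ref{dirsum}).
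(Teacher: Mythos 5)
Your proof is correct and follows essentially the same route as the paper: both arguments reduce to the explicit $j=1/2$ spinors and the observation that the densities $\langle u,u\rangle$ and $\langle\beta u,u\rangle$ have no angular dependence, the paper by writing out the full four-component spinors \eqref{phi1}--\eqref{phi4} for one representative case, you by noting the upper/lower block structure kills the cross terms and checking that $|\Psi^{m}_{0}|^{2}$ and $|\Psi^{m}_{1}|^{2}$ are constant on $\mathbb{S}^{2}$. Your packaging handles all four index choices at once and correctly isolates the $\cos^{2}\theta+\sin^{2}\theta$ cancellation as the decisive (and $j=1/2$-specific) point, but it is the same computation.
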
\begin{proof}
We explicitly write down the functions $\Phi^+$, $\Phi^-$ in the four cases: a straightforward calculation using formulas (\ref{harm}), (\ref{leg}), (\ref{psi-}), (\ref{psi+}) and (\ref{phi}) yields
\begin{equation}\label{phi1}
\Phi^+_{-1/2,-1}=
\left(\begin{array}
{cc}
0\\ \displaystyle\frac{i}{2\sqrt\pi}\\
0\\0
\end{array}\right)\qquad
\Phi^-_{-1/2,-1}=
\left(\begin{array}
{cc}
0\\0\\
\displaystyle\frac{1}{2\sqrt\pi}e^{i\phi}\sin\theta\\
-\displaystyle\frac{1}{2\sqrt\pi}\cos\theta
\end{array}\right).
\end{equation}

\begin{equation}\label{phi2}
\Phi^+_{-1/2,1}=
\left(\begin{array}
{cc}
\displaystyle\frac{i}{2\sqrt\pi}e^{i\phi}\sin\theta\\
\displaystyle
-\frac{i}{2\sqrt\pi}\cos\theta\\
0\\0
\end{array}\right)\qquad
\Phi^-_{-1/2,1}=
\left(\begin{array}
{cc}
0\\0\\
0\\\displaystyle\frac{1}{2\sqrt\pi}
\end{array}\right).
\end{equation}

\begin{equation}\label{phi3}
\Phi^+_{1/2,-1}=
\left(\begin{array}
{cc}
\displaystyle\frac{i}{2\sqrt\pi}\\
0 \\
0\\0
\end{array}\right)\qquad
\Phi^-_{1/2,-1}=
\left(\begin{array}
{cc}
0\\0\\
\displaystyle\frac{1}{2\sqrt\pi}\cos\theta\\
\displaystyle\frac{1}{2\sqrt\pi}e^{i\phi}\sin\theta
\end{array}\right).
\end{equation}

\begin{equation}\label{phi4}
\Phi^+_{1/2,1}=
\left(\begin{array}
{cc}
\displaystyle\frac{i}{2\sqrt\pi}\cos\theta\\
\displaystyle
\frac{i}{2\sqrt\pi}e^{i\phi}\sin\theta\\
0\\0
\end{array}\right)\qquad
\Phi^-_{1/2,1}=
\left(\begin{array}
{cc}
0\\0\\
\displaystyle\frac{1}{2\sqrt\pi}\\0
\end{array}\right).
\end{equation}
We prove Lemma (\ref{1/2}) for the couple $(1/2,1)$, i.e. for functions of the form (\ref{phi4}), being the proof for the other cases completely analogous. 

The generic function $u\in L^2((0,\infty),dr)\otimes  \mathcal{H}_{1/2,1}$ can be written as
$$
u(r,\theta,\phi)=u^+(r)\Phi^+_{1/2,1}(\theta,\phi)+u^-(r)\Phi^-_{1/2,1}(\theta,\phi)
$$
for some radial functions $u^+$, $u^-$. So $f$ takes the vectorial form
\begin{equation}\label{generic}
u=\left(\begin{array}
{cc}
\displaystyle u^+(r)\frac{i}{2\sqrt\pi}\cos\theta\\
\displaystyle
u^+(r)\frac{i}{2\sqrt\pi}e^{i\phi}\sin\theta\\
\displaystyle\frac{u^-(r)}{2\sqrt\pi}\\0
\end{array}\right).
\end{equation}
Thus the Hermitian product $\langle u,u\rangle$ yields
$$
\langle u,u\rangle=\displaystyle-\frac{1}{4\pi}\cos^2\theta\: u^+(r)^2
-\frac{1}{4\pi}\sin^2\theta\: u^+(r)^2+\frac{1}{4\pi}u^-(r)^2=
$$
$$
=-\frac{1}{4\pi}\left(u^+(r)^2-u^-(r)^2\right)
$$
that has no angular components. This proves that if $u\in C^\infty_0((0,\infty),dr)\otimes 
\mathcal{H}_{m_{1/2},k_{1/2}}$ then $F_1(u)\in C^\infty_0((0,\infty),dr)\otimes  \mathcal{H}_{m_{1/2},k_{1/2}}$.

Minor modifications yield the same result also for the nonlinearity $F_2(u)$. In fact we know from Lemma (\ref{stab}) 
that the operator $\beta$ acts on the partial wave subspaces in a very simple way with respect to the basis $\{\Phi^+,\Phi^-\}$: if in fact we associate to the function $u$ its coordinates $(u^+(r),u^-(r))$ with respect to such a basis we have $\beta u=(u^+(r),-u^-(r))$, so that
$$
\langle \beta u,u\rangle=\displaystyle-\frac{1}{4\pi}\cos^2\theta\: u^+(r)^2
-\frac{1}{4\pi}\sin^2\theta\: u^+(r)^2-\frac{1}{4\pi}u^-(r)^2=
$$
$$
=-\frac{1}{4\pi}\left(u^+(r)^2+u^-(r)^2\right)
$$
that again has no angular components, and this shows that if $u\in C^\infty_0((0,\infty),dr)\otimes 
\mathcal{H}_{m_{1/2},k_{1/2}}$ then $F_2(u)\in C^\infty_0((0,\infty),dr)\otimes  \mathcal{H}_{m_{1/2},k_{1/2}}$.
\end{proof}

\section{Global existence for the nonlinear equation}\label{sec:nl}

As an application of the results we have presented in the previous 
section we can now prove global existence for problem (\ref{nlpb})  
with small initial data in one of the four partial wave subspaces  
$\dot H^1((0,\infty),dr)\otimes  \mathcal{H}_{m_{1/2},k_{1/2}}$. 
Our goal is to prove

\begin{theorem}\label{teo2bis}
Consider the Cauchy problem for the 3-dimensional nonlinear 
Dirac equation
\begin{equation}\label{nlpb2}
iu_t-\mathcal{D}u+V(x)u=F(u),\qquad
u(0,x)=f(x)
\end{equation}
where the potential $V$ is of the form 
$$
V=V_1(|x|)\mathbb{I}_4+i\beta(\alpha\cdot\hat{x})V_2(|x|)
$$  
and satisfies assumption (\ref{Vhp}), while the nonlinear
term $F(u)$ is either of the form $\langle \beta u,u\rangle u$ or 
$\langle u,u\rangle u$.

Then for every initial data $f\in\dot H^1((0,\infty),dr)\otimes  \mathcal{H}_{m_{1/2},k_{1/2}}$, with sufficiently small $\dot H^1$ norm, 
there exists a unique global solution $u(t,x)$ to problem 
(\ref{nlpb}) in the class 
$C_t(\mathbb{R},\dot H^1)\cap L^2_t(\mathbb{R},L^\infty).$
\end{theorem}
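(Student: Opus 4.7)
The plan is to run a Picard iteration on the Duhamel formulation of \eqref{nlpb2}, built around the endpoint estimate of Theorem \ref{teo1} and the mixed endpoint--smoothing estimate \eqref{stdir}, together with the partial wave invariance results of the preceding section. The key structural observation that makes the iteration go through is that for $j=1/2$ the data class, the linear flow, and the cubic nonlinearity all preserve the same two--dimensional subspace of spinors, so at every step of the iteration one remains in the regime where the endpoint estimate is available.

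First I would check that the data space embeds into the class $\mathcal{\dot H}^1$ of \eqref{Hclass}, which is the hypothesis required to apply Theorem \ref{teo1}. Inspecting the explicit bases \eqref{phi1}--\eqref{phi4}, for each admissible pair $(m_{1/2},k_{1/2})$ one of $\Phi^{\pm}$ is a constant $4$--vector while the other is, up to a scalar, the image of a constant $4$--vector under $i\beta(\alpha\cdot\hat x)$. Hence a generic element $u^+(r)\Phi^++u^-(r)\Phi^-$ decomposes as a radial $4$--vector $f_1$ plus $\mathcal{D}f_2$, with $f_2$ a radial $4$--vector obtained by integrating once in $r$ the coefficient associated to the second summand; the regularity assumption $u^\pm\in\dot H^1((0,\infty),dr)$ translates into $f_1\in\dot H^1(\mathbb{R}^3)$ and $f_2\in\dot H^2(\mathbb{R}^3)$, as required by \eqref{Hclass}.

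I would then set up the contraction in the space
\[X=\bigl\{u\in C_t(\mathbb{R};\dot H^1)\cap L^2_tL^\infty_x\,:\,u(t,\cdot)\in\dot H^1((0,\infty),dr)\otimes\mathcal{H}_{m_{1/2},k_{1/2}}\text{ for a.e.\ }t\bigr\},\]
with norm $\|u\|_X=\|u\|_{L^\infty_t\dot H^1}+\|u\|_{L^2_tL^\infty_x}$, for the map
\[\Phi(u)(t)=e^{it(\mathcal{D}+V)}f-i\int_0^te^{i(t-s)(\mathcal{D}+V)}F(u(s))\,ds.\]
Proposition \ref{dir+v} and Lemma \ref{1/2} ensure that $\Phi$ respects the partial wave constraint. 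The linear term is controlled by $\|f\|_{\dot H^1}$ in the Strichartz norm via Theorem \ref{teo1}, and in the energy norm by self--adjointness of $\mathcal{D}+V$ combined with the equivalence, for small $V$, between the $\dot H^1$ and graph norms of $\mathcal{D}+V$. For the Duhamel part I would repeat the strategy of the proof of Theorem \ref{teo1}: treat $F(u)-Vu$ as a forcing for the free Dirac equation, apply estimate \eqref{stdir} to the piece $F(u)$ (in the form \eqref{Fform} by Lemma \ref{1/2}) and to $Vu$ (in the form \eqref{Fform} by Lemma \ref{stabflow} applied time by time), and close the $V$--induced term through the smoothing estimate \eqref{smooth} together with the smallness of $\delta$ in \eqref{Vhp}.

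The remaining and most delicate step, which I expect to be the main obstacle, is the cubic estimate
\[\|\langle x\rangle^{1/2+}|D|F(u)\|_{L^2_tL^2_x}\lesssim\|u\|_X^3.\]
In the partial wave representation one has $F(u)=c(r)u$ with $c(r)$ a radial quadratic form in the radial coefficients $u^\pm(r)$, so a fractional Leibniz expansion reduces $|D|F(u)$ to a sum of terms of type $u^2|D|u$ and $(|D|u^2)\,u$. The essentially radial structure of $u$ allows the use of the Strauss pointwise bound $|u(r)|\lesssim r^{-1/2}\|u\|_{\dot H^1}$ in three dimensions; combined with a splitting at $|x|=1$ and with Hardy's inequality near the origin, this absorbs the critical weight $\langle x\rangle^{1/2+}$ and reduces the estimate to a product of $\|u\|_{L^2_tL^\infty_x}$ and $\|u\|_{L^\infty_t\dot H^1}$ factors. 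Once this cubic bound is in hand, the contraction closes in a small ball of $X$ for $\|f\|_{\dot H^1}$ below an explicit threshold, and the fixed point yields the unique global solution in the stated class.
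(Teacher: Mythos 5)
Your overall architecture (contraction mapping, invariance of the $j=1/2$ partial wave subspace under the flow and the cubic nonlinearity, Theorem \ref{teo1} for the linear evolution) matches the paper, but your treatment of the nonlinear Duhamel term contains a genuine gap. You propose to feed $F(u)$ into the weighted mixed estimate \eqref{stdir} and therefore need the cubic bound
\[
\|\langle x\rangle^{1/2+}|D|F(u)\|_{L^2_tL^2_x}\lesssim\|u\|_X^3 ,
\]
and you correctly flag this as the main obstacle --- but the obstacle is not surmountable with the norms available in $X=L^2_tL^\infty_x\cap L^\infty_t\dot H^1$. The weight $\langle x\rangle^{1/2+}$ \emph{grows} at spatial infinity, so the Strauss bound $|u(r)|\lesssim r^{-1/2}\|u\|_{\dot H^1}$ only converts $\langle x\rangle^{1/2+}|u|^2$ into something bounded for $|x|\ge 1$ at the cost of replacing both $L^\infty_x$ factors by $\dot H^1$ factors; schematically the region $|x|\ge 1$ then forces you to estimate $\bigl\|\,\|u(t)\|_{\dot H^1}^2\||D|u(t)\|_{L^2}\bigr\|_{L^2_t}\sim\|\,\|u(t)\|_{\dot H^1}^3\|_{L^2_t}$, which is an $L^2$ norm \emph{in time over all of $\mathbb{R}$} of a quantity that $X$ only controls in $L^\infty_t$. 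It diverges for a generic global solution (there is no time decay in $X$). Mixed splittings fare no better: keeping one factor in $L^\infty_x$ leaves you with $\|\langle x\rangle^{1/2+}u\|_{L^\infty_x}\sim|x|^{0+}\|u\|_{\dot H^1}$, which is unbounded. So the estimate you would need to close the iteration is false as stated, and the route through \eqref{stdir} for the cubic term does not work.

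The paper avoids this entirely: the weighted estimate \eqref{stdir} is used \emph{only} for the linear term $Vu$, where the growing weight is absorbed by the decay \eqref{Vhp} of $V$ and the smoothing estimate \eqref{smooth} (all of this is already packaged inside Theorem \ref{teo1}). For the cubic Duhamel term one instead pulls the $s$-integral out by Minkowski and applies the \emph{homogeneous} endpoint estimate to $e^{i(t-s)\mathcal{D}}F(u(s))$ for each fixed $s$ --- legitimate precisely because Lemma \ref{1/2} guarantees $F(u(s))\in\mathcal{\dot H}^1$ --- which reduces matters to
\[
\|F(u)\|_{L^1_t\dot H^1_x}\lesssim\|u\|_{L^2_tL^\infty_x}^{2}\|u\|_{L^\infty_t\dot H^1_x},
\]
a plain H\"older/Leibniz bound with no weights and no $L^2_t$ of energy norms. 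You should replace your weighted cubic estimate by this Minkowski-plus-homogeneous-endpoint argument; the rest of your proposal (the verification that the data space sits inside $\mathcal{\dot H}^1$, the preservation of the partial wave constraint via Proposition \ref{dir+v} and Lemma \ref{1/2}, and the closing of the contraction on a small ball) is consistent with the paper. A minor further remark: applying \eqref{stdir} to $Vu$ inside the Duhamel integral, as you suggest, duplicates work already done in the proof of Theorem \ref{teo1}; it is cleaner to keep the perturbed propagator for the linear part and treat only $F(u)$ in the Duhamel term.
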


\begin{proof}
The proof is identical for both choices of the form of the
nonlinear term.
We rewrite (\ref{nlpb}) in integral form
\begin{equation}\label{nlint}
u=\displaystyle e^{it\mathcal{D}}f+i\int_0^te^{i(t-s)\mathcal{D}}\left(V(s)u(s)+F(u(s))\right)ds=
\end{equation}
$$
=\displaystyle e^{it\mathcal{D}}f+i\int_0^te^{i(t-s)\mathcal{D}}\left(V(s)u(s)\right)ds+
i\int_0^te^{i(t-s)\mathcal{D}}\left(F(u(s))\right)ds=
$$
$$
\displaystyle =e^{it(\mathcal{D}+V)}f+i\int_0^te^{i(t-s)\mathcal{D}}\left(F(u(s))\right)ds
\equiv\Phi(u)\equiv
I_1+I_2
$$
we denote by $\Phi(u)$ the RHS of (\ref{nlint}) and we check that
the map $\Phi$ is a contraction on the function space 
\begin{equation*}
  X=L^2_tL^\infty_x\cap L^\infty_t\dot H^1_x.
\end{equation*}

In order to estimate the first term $I_1$ we use our endpoint Strichartz estimate $(\ref{endV})$, observing that if $f\in \dot H^1((0,\infty),dr)\otimes  \mathcal{H}_{m_{1/2},k_{1/2}}$ then in particular $f$ is of the form $f=f_1+\mathcal{D}f_2$ with $f_1$, $f_2$ radial functions, so estimate  (\ref{endV}) holds and gives
\begin{equation}\label{I1+I2}
\| I_1\|_X\lesssim \| f\|_{\dot H^1}
\end{equation}

Now we need to handle the nonlinear term $I_2$.  
By Minkowski inequality
\begin{equation*}
  \left\|\displaystyle\int_0^te^{i(t-s)
  \mathcal{D}}F(u(s))ds\right\|_X\leq\int_0^\infty
  \| e^{it\mathcal{D}}e^{-is\mathcal{D}}F(u(s))\|_X ds.
\end{equation*}
By energy conservation we have
\begin{equation*}
  \| e^{it\mathcal{D}}e^{-is\mathcal{D}}F(u(s))\|
  _{L^{\infty}_{t}\dot H^{1}}
  =\| F(u(s))\|_{L^{\infty}_{s}\dot H^{1}}
\end{equation*}
On the other hand, in view of Lemma (\ref{1/2}), we can use
estimate (\ref{homdir}) and we have
\begin{equation*}
  \| e^{it\mathcal{D}}e^{-is\mathcal{D}}F(u(s))\|
  _{L^{2}_{t}L^{\infty}_{x}}
  =\| F(u(s))\|_{\dot H^{1}}\le
  \| F(u(s))\|_{L^{\infty}_{s}\dot H^{1}}.
\end{equation*}
Thus
\begin{equation*}
  \|I_{2}\|_{X}\lesssim \| F(u(s))\|_{L^{\infty}_{s}\dot H^{1}}
\end{equation*}
Then by H\"older inequality in $t,x$ we obtain
$$
\| F(u)\|_{L^1_t\dot H^1_x}\leq
\| u\|_{L^2_tL^\infty_x}^2\| u\|_{L^\infty_t\dot H^1_x}
$$
which implies
\begin{equation*}
  \|\Phi(u)\|_{X}\lesssim\|f\|_{\dot H^{1}}+\|u\|_{X}^{3}.
\end{equation*}
An analogous computation gives
\begin{equation}
\| \Phi(u)-\Phi(v)\|_X\lesssim(\| u\|^2_X+\| v\|_X^2)\| u-v\|_X.
\end{equation}
Therefore if the data belong to a sufficiently small ball
in $\dot H^{1}$, $\Phi$ is a contraction on that ball, and its
unique fixed point is the unique global solutions to problem (\ref{nlpb}) 
in the space $X$.
\end{proof}


\begin{thebibliography}{10}

\bibitem{DanconaFanelli06-a}
Piero D'Ancona and Luca Fanelli.
\newblock {$L^p$}-boundedness of the wave operator for the one dimensional
  {S}chr{\"o}dinger operator.
\newblock {\em Comm. Math. Phys.}, 268(2):415--438, 2006.

\bibitem{DanconaFanelli07-a}
Piero D'Ancona and Luca Fanelli.
\newblock Decay estimates for the wave and {D}irac equations with a magnetic
  potential.
\newblock {\em Comm. Pure Appl. Math.}, 60(3):357--392, 2007.

\bibitem{DanconaFanelli08-a}
Piero D'Ancona and Luca Fanelli.
\newblock Strichartz and smoothing estimates of dispersive equations with
  magnetic potentials.
\newblock {\em Comm. Partial Differential Equations}, 33(4-6):1082--1112, 2008.

\bibitem{DanconaFoschiSelberg07-b}
Piero D'Ancona, Damiano Foschi, and Sigmund Selberg.
\newblock Local well-posedness below the charge norm for the
  {D}irac-{K}lein-{G}ordon system in two space dimensions.
\newblock {\em J. Hyperbolic Differ. Equ.}, 4(2):295--330, 2007.

\bibitem{DanconaFoschiSelberg07-a}
Piero D'Ancona, Damiano Foschi, and Sigmund Selberg.
\newblock Null structure and almost optimal local regularity for the
  {D}irac-{K}lein-{G}ordon system.
\newblock {\em J. Eur. Math. Soc. (JEMS)}, 9(4):877--899, 2007.

\bibitem{DanconaFoschiSelberg10-a}
Piero D'Ancona, Damiano Foschi, and Sigmund Selberg.
\newblock Null structure and almost optimal local well-posedness of the
  {M}axwell-{D}irac system.
\newblock {\em Amer. J. Math.}, 132(3):771--839, 2010.

\bibitem{EscobedoVega97-a}
Miguel Escobedo and Luis Vega.
\newblock A semilinear {D}irac equation in {$H^s({\bf R}^3)$} for {$s>1$}.
\newblock {\em SIAM J. Math. Anal.}, 28(2):338--362, 1997.

\bibitem{FangWang08-a}
Daoyuan Fang and Chengbo Wang.
\newblock Weighted {S}trichartz estimates with angular regularity and their
  applications.
\newblock {\em Forum Math.} 23 (2011), 181-205.

\bibitem{GinibreVelo95-b}
Jean Ginibre and Giorgio Velo.
\newblock Generalized {S}trichartz inequalities for the wave equation.
\newblock {\em J. Funct. Anal.}, 133(1):50--68, 1995.

\bibitem{KeelTao98-a}
Markus Keel and Terence Tao.
\newblock Endpoint {S}trichartz estimates.
\newblock {\em Amer. J. Math.}, 120(5):955--980, 1998.

\bibitem{KlainermanMachedon93-a}
Sergiu Klainerman and Matei Machedon.
\newblock Space-time estimates for null forms and the local existence theorem.
\newblock {\em Comm. Pure Appl. Math.}, 46(9):1221--1268, 1993.

\bibitem{MachiharaNakamuraNakanishi05-a}
Shuji Machihara, Makoto Nakamura, Kenji Nakanishi, and Tohru Ozawa.
\newblock Endpoint {S}trichartz estimates and global solutions for the
  nonlinear {D}irac equation.
\newblock {\em J. Funct. Anal.}, 219(1):1--20, 2005.

\bibitem{MachiharaNakamuraOzawa04-a}
Shuji Machihara, Makoto Nakamura, and Tohru Ozawa.
\newblock Small global solutions for nonlinear {D}irac equations.
\newblock {\em Differential Integral Equations}, 17(5-6):623--636, 2004.

\bibitem{MachiharaNakanishiOzawa98}
Shuji Machihara, Kenji Nakanishi and Tohru Ozawa.
\newblock Small global solutions and the nonrelativistic limit for the nonlinear Dirac equation.
\newblock {\em Rev. Mat. Iberoamericana}, 19 (2003), 179-194.


\bibitem{Ranada83-a}
Antonio~F. Ra{\~n}ada.
\newblock On nonlinear classical {D}irac fields and quantum physics.
\newblock In {\em Old and new questions in physics, cosmology, philosophy, and
  theoretical biology}, pages 363--376. Plenum, New York, 1983.

\bibitem{Thaller92-a}
Bernd Thaller.
\newblock {\em The {D}irac equation}.
\newblock Texts and Monographs in Physics. Springer-Verlag, Berlin, 1992.

\end{thebibliography}
\end{document}